\documentclass[a4paper,11pt,twoside]{article}
\RequirePackage[a4paper,head=1cm]{geometry}
\usepackage[utf8x]{inputenc}
\usepackage{textcomp}
\usepackage{amsmath,amsfonts,verbatim,afterpage,euscript,mathrsfs,amssymb,empheq}
\usepackage{amsthm}
\usepackage{dsfont}
\usepackage{hyperref}
\usepackage{pst-fill,pst-grad}
\usepackage{textcomp}
\usepackage{multicol}
\usepackage{amsmath}
\usepackage{amssymb}
\usepackage{hyperref}
\usepackage{dsfont}
\usepackage{colortbl}

\newtheorem{Theoreme}{Theorem}

\newtheorem{Remarque}{\bf Remark}
\newcommand{\mysection}{\setcounter{equation}{0} \section}

\title{Pointwise estimates for rough operators in a metric measure framework under some Ahlfors regularity conditions} 
\author{Diego Chamorro\footnote{Laboratoire de Math\'ematiques et Mod\'elisation d'Evry (LaMME) - UMR 8071. Universit\'e d'Evry Val d'Essonne, 23 Boulevard de France, 91037 Evry Cedex, France. email: \textit{diego.chamorro@univ-evry.fr}}, Anca-Nicoleta Marcoci\footnote{Department of Mathematics and Computer Science. Technical University of Civil Engineering, Bucharest, Bld. Lacul Tei, no. 124, sector 2. Romania. email: \textit{anca.marcoci@utcb.ro}}, Liviu-Gabriel Marcoci\footnote{Department of Mathematics and Computer Science. Technical University of Civil Engineering, Bucharest, Bld. Lacul Tei, no. 124, sector 2. Romania.  email: \textit{liviu.marcoci@utcb.ro}}.} 
\begin{document} 
\maketitle 
\begin{scriptsize}
\abstract{\noindent  We establish a new pointwise estimate for a class of rough operators in the setting of metric measure spaces endowed with a measure which is Ahlfors regular. This pointwise inequality can be divided in two steps: the first one relies in a subrepresentation formula that involves a modified Riesz potential and the upper gradient of the function considered and the second step gives a pointwise control of the Riesz potential in terms of a maximal function and a Morrey norm. We also investigate a family of functional inequalities that can be deduced from this pointwise estimate.}\\

{\footnotesize
\noindent \textbf{Keywords: Riesz operators; singular integral operators; pointwise estimates.} \\
\noindent \textbf{MSC (2020) Primary: 42B20; Secondary: 42B25}
}
\end{scriptsize}
\mysection{Introduction}
The main purpose of this article is to study some pointwise estimates for rough operators defined in the general framework of metric measure spaces $(X, d, \mu)$. The analysis of the boundedness of operators in this setting has been developing for several years and many of the classical tools and techniques available in the usual framework of $\mathbb{R}^n$ (endowed with its natural distance and measure) have been generalized or adapted to this more general structure. See for example \cite{EridaniSawano}, \cite{GG}, \cite{Hytonen}, \cite{NTV}, \cite{Samko}, \cite{Tolsa} and the references therein.\\

Since the work of Coifman and Weiss \cite{CW}, it is known that the spaces of homogeneous type provide a general framework in which several results from harmonic analysis on Euclidean spaces
 can be generalized. One of the main ingredients of this theory is related to Borel measures $\mu$ that satisfy the \emph{doubling property} i.e. there exists a positive constant $C$ such that 
\begin{equation}\label{Doubling}
\mu(B(x,2r))\leq C\mu(B(x,r)),
\end{equation}
for every ball $B(x,r)$ of center $x$ and radius $r$. In \cite{NTV} is presented a theory of Calder\'on-Zygmund operators on non-homogeneous spaces by replacing the doubling condition \eqref{Doubling} with the condition 
\begin{equation}\label{n-dim}
\mu(B(x,r))\le Cr^n,
\end{equation}
where $n$ is a positive fixed number and the constant $C$ is independent of $x$ and $r$ in a separable metric space.\\

Let us mention that the doubling property is deeply connected to the Poincar\'e-Sobolev inequality (see  \cite{Heinonen}, \cite{Heinonen0}, \cite{Heinonen1}, \cite{Keith} and \cite{Korobenko}) and all these tools provide an interesting framework to analyze some properties of generic operators that fall outside the usual setting considered in previous works.\\

Motivated by these generalizations, in this article we will study some pointwise estimates between a particular type of operators and some modified Riesz-like potentials. More specifically we will consider the following framework: 
\begin{itemize}
\item[$\bullet$] {\bf Properties of the space}. Let $(X,d,\mu)$ be a metric measure space and we denote $B(x,r)$ the open balls given by  $B(x,r)=\{y\in X: d(x,y)<r\}$. The measure $\mu$ is assumed to be a Borelian measure over the metric space $X$ and we will always assume that the spheres $S(x,r)=\{y\in X: d(x,y)=r\}$ are of measure zero for all $x\in X$ and all $0<r<+\infty$.
\item[$\bullet$] {\bf Properties of the measure}. Besides the general properties of the measure $\mu$, we will consider here a nonnegative, nonatomic measure $\mu$ which satisfies \emph{the upper and lower Ahlfors condition}:
\begin{equation}\label{Ahlfors}
\mathfrak{c}_1r^\nu\leq \mu(B(x,r))\leq \mathfrak{c}_2r^\nu,
\end{equation}
for all $x\in X$ and $0<r<+\infty$, for some power index $0<\nu<+\infty$ and for some positive constants $0<\mathfrak{c}_1\leq \mathfrak{c}_2<+\infty$. This type of measure $\mu$ is called Ahlfors $\nu-$\emph{regular measure} in \cite{Adamowicz}, \cite{Harjulehto}, \cite{Cruz-Shukla}.
\begin{Remarque}
Under this lower and upper Ahlfors-type condition (\ref{Ahlfors}) we obtain that the measure $\mu$ is doubling since, for all $x\in X$ and all $0<r<+\infty$, we have the estimates
$$\mu(B(x,2r))\leq \mathfrak{c}_2 (2r)^\nu= \mathfrak{c}_2 \tfrac{2^\nu}{\mathfrak{c}_1} \; \mathfrak{c}_1 r^\nu\leq  \mathfrak{c}_2 \tfrac{2^\nu}{\mathfrak{c}_1}\, \mu(B(x,r)).$$
The reverse is not true, for example a space with doubling measure which is not Ahlfors regular is the weighted  $\mathbb{R}^n$ with $d\mu=|x|^\alpha dx$, with $\alpha>-n$ (see \cite[Example 3.5, p. 67]{Bjorn} for details).
\end{Remarque}

Note that in the particular case when $X=\mathbb{R}^n$ and $d$ is the usual Euclidean distance, we can easily see that the Lebesgue measure verifies the condition (\ref{Ahlfors}) with $\nu=n$, as we have $|B(x,r)|=v_n r^n$ where $v_n$ is the volume of the $n$-dimensional unit ball.\\

We can now define in a usual way the Lebesgue spaces $L^p(X)$, for $1\leq p\leq +\infty$ as the set of measurable functions $f:X\longrightarrow \mathbb{R}$ such that 
$$\|f\|_{L^p}=\left(\int_{X}|f(x)|^pd\mu(x)\right)^{\frac{1}{p}}<+\infty,$$
with the usual modifications when $p=+\infty$. Recall that if $\frac{1}{p}+\frac{1}{p'}=1$ with $1\leq p\leq +\infty$ and if $f,g:X\longrightarrow \mathbb{R}$ are two measurable functions such that $f\in L^p(X)$ and $g\in L^{p'}(X)$, then we have the classical H\"older inequality given by 
$\displaystyle{\int_{X}|f(x)g(x)|d\mu(x)\leq \|f\|_{L^p}\|g\|_{L^{p'}}}$. We recall also that the set of locally integrable functions $L^1_{loc}(X)$ is given by the condition 
$$\|f\|_{L^1(A)}=\int_A|f(x)|d\mu(x)<+\infty, \quad \mbox{for all compact set } A.$$
We will denote by $f_E$ the integral average defined as 
$$f_E=\frac{1}{\mu(E)}\int_E f(x)d\mu(x),$$
where $E\subset X$ is a $\mu$-measurable set of positive measure $0<\mu(E)<+\infty$.
\item[$\bullet$] {\bf The upper gradient}. In this metric setting, one of the most natural generalization of the gradient of a function was introduced in \cite{Heinonen} as a tool to study quasiconformal maps.  Let $f:X\longrightarrow \mathbb{R}$ be a $L^1_{loc}(X)$ function, the \emph{upper gradient} of $f$ is a function $g:X\longrightarrow [0,+\infty]$ such that we have 
\begin{equation}\label{Def_UpperGradient}
|f(x)-f(y)|\leq \int_{\gamma_{x,y}}g ds,
\end{equation}
for all $x,y\in X$ and for all rectifiable curves $\gamma_{x,y}$ joining $x$ to $y$. See \cite{Heinonen0} and \cite{Heinonen1} for more details and properties of upper gradients. 
\item[$\bullet$] {\bf The Poincaré-Sobolev inequality}.  A metric measure space $(X, d, \mu)$ is said to support a (weak) $(\mathfrak{s},q)$-Poincaré inequality with $1\leq \mathfrak{s}<q<+\infty$, if there exist some constants $1<C<+\infty$ and $1\leq \sigma<+\infty$ such that we have the estimate
\end{itemize}
\begin{equation}\label{Poincare_Sobolev_Ineq}
\left(\frac{1}{\mu(B(x,r))}\int_{B(x,r)}|f(x)-f_B|^q d\mu(x)\right)^{\frac{1}{q}}\leq C r\; \left(\frac{1}{\mu(B(x,\sigma r))}\int_{B(x, \sigma r)}g(x)^\mathfrak{s} d\mu(x)\right)^{\frac{1}{\mathfrak{s}}},
\end{equation}
\begin{itemize}
\item[] whenever $B(x,r)$ is a ball of radius $0<r<+\infty$, $f\in L^1_{loc}(X)$ and $g:X\longrightarrow [0,+\infty]$ is an upper gradient of $f$ in the sense of the expression (\ref{Def_UpperGradient}) above. The definition for spaces supporting Poincaré-Sobolev  inequalities are due to Heinonen and Koskela in \cite{Heinonen}. See also the book \cite{Heinonen1} for more details.

\begin{Remarque}
There is a very deep and strong connection between doubling metric measure spaces and the Poincaré-Sobolev inequality (\ref{Poincare_Sobolev_Ineq}). See \cite{Alvarado}, \cite{Hajlasz}, \cite{Keith} and \cite{Korobenko} for more details on this topic. 
\end{Remarque}
\item[$\bullet$] {\bf The Kernels}. The properties of the operators we want to study here will depend on the properties of their kernels and we will consider a kernel $K:X\times X\setminus\{x=y\} \longrightarrow \mathbb{R}$, which is a measurable function such that, for all $x,y\in X$ (with $x\neq y$), we have the pointwise estimate
\begin{equation}\label{TailleKernel}
|K(x,y)|\leq \frac{C}{d(x,y)^\nu}, 
\end{equation}
where $\nu>0$. We will also assume that, for all $x\neq y$, the limit 
$$\underset{\varepsilon\to 0}{\lim}\int_{\{\varepsilon<d(x,y)\}}K(x,y)d\mu(y),$$
exists for $\mu$-almost every point $x\in X$ and finally, we will assume that we have the following pseudo-radial null condition
\begin{equation}\label{PseudoRadial}
\int_{\{a<d(x,y)<b\}}K(x,y)d\mu(y)=0,
\end{equation}
for all $0<a<b<+\infty$ and for all $x\in X$. Note in particular that we do not impose any regularity condition to the kernel $K(\cdot, \cdot)$ and this type of kernel considered here does not fall in the setting considered in  \cite{GG}, \cite{NTV} or \cite{Tolsa}. Indeed, in these articles the following ``regularity'' condition is assumed for the kernels $K$:
$$|K(s, t)-K(s_0, t)|, |K(t, s)-K(t, s_0)| \leq C\frac{d(s,s_0)^\alpha}{d(t,s_0)^{\nu+\alpha}},$$
for some $\alpha>0$, whenever $d(t, s_0)\geq 2d(s, s_0)$. Instead, we only assume here integrability conditions for the kernels. 
\item[$\bullet$] {\bf The Operators $T_K$ and $T^*_K$}. We will now define, for a suitable function $f:X\longrightarrow \mathbb{R}$, the operator $T_K$,  associated to a kernel $K$ that satisfies the previous conditions (\ref{TailleKernel})-(\ref{PseudoRadial}), by the expression
\begin{equation}\label{Operator}
T_K(f)(x)=\int_X K(x,y)f(y)d\mu(y).
\end{equation}
Associated to this operator we consider the maximal singular operator associated to $T_K$
\begin{equation}\label{MaximalOperator}
T^*_{K}(f)(x)=\underset{\epsilon>0}{\sup}\left|\int_{\{\epsilon<d(x,y)\}}K(x,y)f(y)d\mu(y)\right|.
\end{equation}
The operator $T_K$ defined in (\ref{Operator}) with a kernel $K$ that satisfies the conditions (\ref{TailleKernel})-(\ref{PseudoRadial}) will be denoted here as a \emph{rough Calder\'on-Zygmund} operator. \\
\item[$\bullet$] {\bf The Riesz-type operator}.  If $f:X\longrightarrow \mathbb{R}$ is a locally integrable function then for a parameter $0<\mathfrak{s}<\nu$, we define the following Riesz-type operator $\mathcal{R}_{\mathfrak{s},\mu}$ by the expression
\begin{equation}\label{Def_operatorRieszLike}
\mathcal{R}_{\mathfrak{s},\mu}(f)(x)=\int_{X}\frac{d(x,y)^{\mathfrak{s}}}{\mu(B(x,d(x,y)))}f(y)d\mu(y).
\end{equation}
It is easy to see that if $d\mu$ is the usual Lebesgue measure and if $d(x,y)=|x-y|$ is the natural distance over $\mathbb{R}^n$, we obtain the classical Riesz potentials (up to some dimensional constants): indeed, if we denote by $v_n$ the volume of the $n$-dimensional unit ball, we have
\begin{eqnarray*}
\mathcal{R}_{\mathfrak{s},\mu}(x)&=&\int_{\mathbb{R}^n}\frac{|x-y|^{\mathfrak{s}}}{\mu(B(x,|x-y|))}f(y)d\mu(y)=\int_{\mathbb{R}^n}\frac{|x-y|^\mathfrak{s}}{|B(x,|x-y|)|}f(y)dy\\
&=&\int_{\mathbb{R}^n}\frac{f(y)}{v_n|x-y|^{n-\mathfrak{s}}}dy= c(n)I_\mathfrak{s}(f)(x).
\end{eqnarray*}
\end{itemize}
With these concepts at hand, we will now study some pointwise inequalities that involve the rough maximal operator $T^*_K$ defined in (\ref{MaximalOperator}) and the Riesz-type operator $\mathcal{R}_{\mathfrak{s},\mu}$ presented in (\ref{Def_operatorRieszLike}).\\
\begin{Theoreme}[\bf Rough Operator estimate]\label{Theo_RoughOpEstimate}
Consider $(X, d, \mu)$ a metric measure space endowed with a Ahlfors regular measure $\mu$ that satisfies the lower and upper condition (\ref{Ahlfors}) with some power constant $0<\nu<+\infty$ and with constants $0<\mathfrak{c}_1\leq \mathfrak{c}_2<+\infty$. Assume also that the metric measure space $(X,d,\mu)$ supports the weak Poincaré-Sobolev inequality (\ref{Poincare_Sobolev_Ineq}).\\ 

\noindent For a locally integrable function $f:X\longrightarrow \mathbb{R}$, if $g:X\longrightarrow [0,+\infty]$ denotes its upper gradient in the sense of the expression (\ref{Def_UpperGradient}), (which is assumed to be a locally integrable function) and if we have the condition 
\begin{equation}\label{ConditionConstantes}
2^{1-\nu}\left(\tfrac{\mathfrak{c}_2}{\mathfrak{c}_1}\right)<1,
\end{equation}
then for an operator $T^*_K$ defined in (\ref{MaximalOperator}) associated to a kernel $K(\cdot, \cdot)$ that satisfies the conditions (\ref{TailleKernel})-(\ref{PseudoRadial}), we have the following pointwise estimate
\begin{equation}\label{EstimationOperateur1}
T^*_{K}(f)(x)\leq C \mathcal{R}_{1,\mu}\left(g\right)(x),
\end{equation}
where $\mathcal{R}_{1,\mu}$ is the Riesz-type operator defined in the expression (\ref{Def_operatorRieszLike}) above.
\end{Theoreme}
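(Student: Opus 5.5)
The plan follows the classical Euclidean route for rough operators with vanishing spherical means. The pseudo-radial null condition (\ref{PseudoRadial}) lets us subtract constants, and the resulting oscillation is then controlled by a subrepresentation formula built from the Poincaré--Sobolev inequality (\ref{Poincare_Sobolev_Ineq}) and the Ahlfors condition (\ref{Ahlfors}).

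\textbf{Step 1: reduction to annuli.} Fix $x$ (a Lebesgue point of $f$) and $\epsilon>0$. For $k\geq 0$ set $r_k=2^{k}\epsilon$, $A_k=\{r_k<d(x,y)\leq r_{k+1}\}$ and $B_k=B(x,r_k)$. By (\ref{PseudoRadial}), and since spheres have measure zero,
$$\int_{\{\epsilon<d(x,y)\}}K(x,y)f(y)\,d\mu(y)=\sum_{k\geq0}\int_{A_k}K(x,y)\big(f(y)-f(x)\big)\,d\mu(y).$$
For the moment we pass the limit through the sum formally, assuming enough decay of $f$; we justify this at the end by truncating $f$ and using monotone convergence on the right-hand side. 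By (\ref{TailleKernel}) and the lower Ahlfors bound, each term is at most
$$C r_k^{-\nu}\int_{A_k}|f(y)-f(x)|\,d\mu(y)\leq C\,\frac{1}{\mu(B_{k+1})}\int_{B_{k+1}}|f(y)-f(x)|\,d\mu(y).$$
So it suffices to prove
$$\sum_{k\geq 1}\frac{1}{\mu(B_k)}\int_{B_k}|f-f(x)|\,d\mu\leq C\,\mathcal{R}_{1,\mu}(g)(x),$$
with $C$ independent of $\epsilon$. Taking the supremum over $\epsilon$ then gives (\ref{EstimationOperateur1}).

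\textbf{Step 2: telescoping the oscillation.} Write $|f(y)-f(x)|\leq |f(y)-f_{B_k}|+|f_{B_k}-f(x)|$. The first piece is handled by the Poincaré inequality (with $q\geq1$ and Hölder to pass to the $L^1$ average):
$$\frac{1}{\mu(B_k)}\int_{B_k}|f-f_{B_k}|\,d\mu\leq C\, r_k\,\frac{1}{\mu(B(x,\sigma r_k))}\int_{B(x,\sigma r_k)}g\,d\mu .$$
Here we use $\mathfrak{s}=1$ after possibly applying Hölder, since a $(1,q)$-inequality follows from the $(\mathfrak{s},q)$ one only in the opposite direction; if needed, we work with $(\frac{1}{\mu}\int g^{\mathfrak{s}})^{1/\mathfrak{s}}$ and accept that at this step. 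For the second piece, since $x$ is a Lebesgue point, $f(x)=\lim_j f_{B_j}$ as $j\to-\infty$, where we extend the dyadic scales downward. Doubling gives $|f_{B_{j}}-f_{B_{j+1}}|\leq C\,r_{j+1}\,\frac{1}{\mu(\sigma B_{j+1})}\int_{\sigma B_{j+1}} g$, and hence $|f_{B_k}-f(x)|\leq C\sum_{j\leq k} r_j\,(g)_{\sigma B_j}$.

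\textbf{Step 3: resumming into the Riesz potential.} Both pieces reduce to controlling $\sum_{j\in\mathbb{Z}} r_j\,\mu(\sigma B_j)^{-1}\int_{\sigma B_j}g\,d\mu$ together with double sums of the form $\sum_k\sum_{j\leq k}$. The single sum is bounded by $C\mathcal{R}_{1,\mu}(g)(x)$. To see this, use $\frac{d(x,y)}{\mu(B(x,d(x,y)))}\geq c\, d(x,y)^{1-\nu}$ from the upper Ahlfors bound, decompose $\sigma B_j$ into annuli $\sigma A_i$ with $i\leq j$, and sum the resulting geometric series in $j\geq i$ with ratio $2^{1-\nu}$.

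The main obstacle is the double sum coming from $|f_{B_k}-f(x)|$, which is summed over all $k\geq1$. Exchanging the order of summation gives, for each $j$, a factor $\#\{k\geq j\}$ that diverges. To fix this, we instead subtract $f_{B_k}$ rather than $f(x)$, using (\ref{PseudoRadial}) once more: on each annulus, $\int_{A_k}K f_{B_{k+1}}\,d\mu=0$ as well. This leaves only the local oscillations $\frac{1}{\mu(B_{k+1})}\int_{B_{k+1}}|f-f_{B_{k+1}}|$, and the term $f(x)$ never appears. Each of these is bounded by Step 2's Poincaré estimate. Expanding $\int_{\sigma B_k}g$ over annuli and comparing $r_k\,\mu(B_k)^{-1}$ with $d(x,y)\,\mu(B(x,d(x,y)))^{-1}$ through the ratio of the upper and lower Ahlfors constants yields a geometric series with ratio $2^{1-\nu}\mathfrak{c}_2/\mathfrak{c}_1$. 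Condition (\ref{ConditionConstantes}) is exactly what makes this series converge, which gives the bound $C\mathcal{R}_{1,\mu}(g)(x)$.
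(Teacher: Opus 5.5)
Once you discard the $f(x)$-subtraction, your argument follows the paper up to the key sum. On each annulus you subtract a ball average using (\ref{PseudoRadial}), control $|K|$ by (\ref{TailleKernel}), normalize with the Ahlfors bound, and apply the Poincar\'e inequality with $\mathfrak{s}=1$. This reduces everything, uniformly in $\epsilon$, to $\mathscr{S}=\sum_k r_k\,\mu(B(x,\sigma r_k))^{-1}\int_{B(x,\sigma r_k)}g\,d\mu$.

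The genuine difference is how $\mathscr{S}$ is summed. The paper splits each ball into its outer annulus and the inner ball of half the radius. It bounds the annulus part by $\mathcal{R}_{1,\mu}(g)(x)$ and absorbs the inner part as $2^{1-\nu}(\mathfrak{c}_2/\mathfrak{c}_1)\mathscr{S}$. That is the only place (\ref{ConditionConstantes}) is used, and the absorption tacitly presupposes $\mathscr{S}<\infty$. Your Step~3 instead interchanges sum and integral:
\[
\mathscr{S}\le \frac{1}{\mathfrak{c}_1\sigma^{\nu}}\int_X g(y)\sum_{k:\ \sigma r_k>d(x,y)} r_k^{1-\nu}\,d\mu(y)\le \frac{\mathfrak{c}_2}{\mathfrak{c}_1\,\sigma\,(1-2^{1-\nu})}\,\mathcal{R}_{1,\mu}(g)(x).
\]
This uses the lower Ahlfors bound, a geometric series of ratio $2^{1-\nu}$, and $d(x,y)^{1-\nu}\le \mathfrak{c}_2\, d(x,y)/\mu(B(x,d(x,y)))$. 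It is more direct, gives finiteness for free, and needs only $\nu>1$, which (\ref{ConditionConstantes}) implies.

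So your closing claim that (\ref{ConditionConstantes}) is ``exactly'' what makes the series converge misdescribes your own argument. On your route $\mathfrak{c}_2/\mathfrak{c}_1$ is only a multiplicative constant. The condition can therefore be weakened to $\nu>1$, which is already implicit in the definition of $\mathcal{R}_{1,\mu}$ (it requires $\mathfrak{s}=1<\nu$). The paper says it does not know whether this condition can be removed.

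Several points should be fixed in the write-up:

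\textbf{Start from the ball averages.} Subtract $f_{B_{k+1}}$ on $A_k$ from the outset. The Step~1 reduction to bounding $\sum_k \mu(B_k)^{-1}\int_{B_k}|f-f(x)|\,d\mu$ is a false target, not merely an inefficient one. For compactly supported $f$ with $f(x)\neq 0$ these averages tend to $|f(x)|$, so the sum is infinite while $\mathcal{R}_{1,\mu}(g)(x)$ can be finite. The Lebesgue-point telescoping of Step~2 then becomes superfluous.

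\textbf{Commit to $\mathfrak{s}=1$.} Assume the $(1,q)$-Poincar\'e inequality outright, as the paper does (``$1=\mathfrak{s}$''), rather than hedging. With only $\mathfrak{s}>1$ your chain would end in $\sum_k r_k\big((g^{\mathfrak{s}})_{B(x,\sigma r_k)}\big)^{1/\mathfrak{s}}$, which $\mathcal{R}_{1,\mu}(g)$ does not control.

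\textbf{Which Ahlfors bound.} The Step~1 estimate $r_k^{-\nu}\le 2^{\nu}\mathfrak{c}_2\,\mu(B_{k+1})^{-1}$ comes from the upper Ahlfors inequality, not the lower one.
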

\noindent Some remarks are in order there. First we note that the estimate (\ref{EstimationOperateur1}) above is a generalization to nonconvolution type Calder\'on-Zygmund operators in a metric measure framework with measures being Ahlfors regular  of our previous work \cite{ChMarcociMarcoci2} and, to the best of our knowledge, this result is new for this class of ``rough'' operators. Remark next that, as pointed out before, we do not impose any Lipschitz-H\"older regularity to the kernels $K(\cdot, \cdot)$ but instead we require the pseudo-radial null condition (\ref{PseudoRadial}). This conditions is reminiscent of the case of convolution type operators as considered in \cite{ChMarcociMarcoci2}, \cite{Hoang}, \cite{Hoang1} or \cite{Hoang2}, where the operators considered there are of the form 
$$T_\Omega(f)(x)=\int_{\mathbb{R}^n}\frac{\Omega(y/|y|)}{|y|^n}f(x-y)dy,$$
and in this case the null condition (\ref{PseudoRadial}) is a straightforward consequence of the fact that the function $\Omega:\mathbb{S}^{n-1}\longrightarrow \mathbb{R}$ is such that $\displaystyle{\int_{\mathbb{S}^{n-1}}\Omega \ d\sigma=0}$. Note now that the condition (\ref{ConditionConstantes}) is essentially technical and imposes a special behavior of the measure $\mu$ and we do not know if it is possible to get rid of this condition. To finish the remarks, we point out that it is an interesting open problem to generalize the estimate (\ref{EstimationOperateur1}) to a metric measure setting where we \emph{only} dispose the upper Ahlfors condition 
$\mu(B(x,r))\leq \mathfrak{c}_2r^\nu$. This research program will probably require a different approach which it is outside the scope of this article.\\

\noindent The estimate (\ref{EstimationOperateur1}) can be transformed into a more useful pointwise inequality and for this we need to introduce two more objects. 
\begin{itemize}
\item[$\bullet$] {\bf  Maximal functions}. For a locally integrable function $f:X\longrightarrow \mathbb{R}$, we define the maximal function $\mathscr{M}_{\mu}$ by the formula
\begin{equation}\label{Def1Maximalfunctions}
\mathscr{M}_{\mu}(f)(x)=\displaystyle{\underset{B \ni x}{\sup } \;\frac{1}{\mu(B)}\int_{B }|f(y)|d\mu(y)},
\end{equation}
where the supremum is taken over all open balls $B$ that contain the point $x$.
\item[$\bullet$] {\bf Morrey spaces}. For $1\leq p\leq q<+\infty$, the Morrey spaces $\mathcal{M}^{p,q}_\mu(X)$ are defined as the set of all measurable functions $f:X\longrightarrow \mathbb{R}$ such that the condition
\begin{equation}\label{Def_Morrey_space}
\|f\|_{\mathcal{M}^{p,q}_{\mu}}=\underset{x\in X, \; r>0}{\sup}\left(\frac{1}{\mu(B(x,r))^{1-\frac{p}{q}}}\int_{B(x,r)}|f(y)|^p d\mu(y)\right)^{\frac{1}{p}}<+\infty,
\end{equation}
is satisfied. Note in particular that if $p=q$, then we recover the usual Lebesgue spaces, \emph{i.e.} we have $L^q(X)=\mathcal{M}^{q,q}_\mu(X)$.
\end{itemize}
Our next  result show explicitly how to control the Riesz-type operator $\mathcal{R}_{\mathfrak{s},\mu}$ by maximal functions and a norm of a Morrey space. 
\begin{Theoreme}[{\bf Morrey-type pointwise inequality}]\label{Theo_Pointwise_Morrey}
Consider $(X, d, \mu)$ a metric measure space endowed with a measure $\mu$ that satisfies the upper and lower Ahlfors condition (\ref{Ahlfors}) with a power index $0<\nu<+\infty$ and constants $0<\mathfrak{c}_1\leq \mathfrak{c}_2<+\infty$.\\ 

\noindent Let $f:X\longrightarrow \mathbb{R}$ be a measurable function that belongs to a Morrey space $\mathcal{M}^{p,q}_\mu(X)$ defined in the expression (\ref{Def_Morrey_space}) above with $1< p\leq q<+\infty$.\\

\noindent If $0<\mathfrak{s}<\nu$ is a parameter such that $\mathfrak{s}<\frac{\nu}{q}$, 
then the Riesz-type operator $\mathcal{R}_{\mathfrak{s},\mu}(f)$ defined in the expression (\ref{Def_operatorRieszLike}) can be controlled pointwise in the following manner:
\begin{equation}\label{PointWiseIneq_Feli}
|\mathcal{R}_{\mathfrak{s},\mu}(f)(x)|\leq C \mathscr{M}_\mu(f)(x)^{1-\frac{q\mathfrak{s}}{\nu}}\|f\|_{\mathcal{M}_\mu^{p,q}}^{\frac{q\mathfrak{s}}{\nu}},
\end{equation}
where $C=C(\mathfrak{s}, \mathfrak{c}_1, \mathfrak{c}_2, \nu, q)$.
\end{Theoreme}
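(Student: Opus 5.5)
The proof is a Hedberg-type splitting at a radius $\rho>0$, chosen at the end. Fix $x\in X$ and write
$$|\mathcal{R}_{\mathfrak{s},\mu}(f)(x)|\leq \int_{\{d(x,y)<\rho\}}\frac{d(x,y)^{\mathfrak{s}}}{\mu(B(x,d(x,y)))}|f(y)|d\mu(y)+\int_{\{d(x,y)\geq\rho\}}\frac{d(x,y)^{\mathfrak{s}}}{\mu(B(x,d(x,y)))}|f(y)|d\mu(y)=I_1+I_2.$$
In both pieces I decompose into dyadic annuli $A_k=\{2^{k}\le d(x,y)<2^{k+1}\}$. On $A_k$ the lower Ahlfors bound (\ref{Ahlfors}) gives
$$\frac{d(x,y)^{\mathfrak{s}}}{\mu(B(x,d(x,y)))}\leq \mathfrak{c}_1^{-1}d(x,y)^{\mathfrak{s}-\nu}\le C2^{k(\mathfrak{s}-\nu)},$$
and the upper bound gives $\mu(B(x,2^{k+1}))\le \mathfrak{c}_2 2^{(k+1)\nu}$.

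\textbf{Local part.} For the annuli inside $B(x,\rho)$,
$$\int_{A_k}\cdots\leq C2^{k(\mathfrak{s}-\nu)}\mu(B(x,2^{k+1}))\,\frac{1}{\mu(B(x,2^{k+1}))}\int_{B(x,2^{k+1})}|f|\,d\mu\le C2^{k\mathfrak{s}}\mathscr{M}_\mu(f)(x).$$
Summing over $2^{k+1}\le 2\rho$ gives a convergent geometric series because $\mathfrak{s}>0$, so $I_1\le C\rho^{\mathfrak{s}}\mathscr{M}_\mu(f)(x)$.

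\textbf{Tail part.} On $A_k$ with $2^k\geq\rho/2$, I use Hölder with exponents $p,p'$ and then the Morrey norm (\ref{Def_Morrey_space}):
$$\int_{B(x,2^{k+1})}|f|\,d\mu\le \mu(B)^{1-\frac1p}\Big(\int_B|f|^p\Big)^{\frac1p}\le \mu(B)^{1-\frac1p}\mu(B)^{\frac1p-\frac1q}\|f\|_{\mathcal{M}^{p,q}_\mu}=\mu(B)^{1-\frac1q}\|f\|_{\mathcal{M}^{p,q}_\mu}.$$
Combined with the upper Ahlfors bound, the contribution of $A_k$ is at most $C2^{k(\mathfrak{s}-\nu)}2^{k\nu(1-\frac1q)}\|f\|=C2^{k(\mathfrak{s}-\nu/q)}\|f\|_{\mathcal{M}^{p,q}_\mu}$. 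The hypothesis $\mathfrak{s}<\nu/q$ is exactly what makes this geometric series converge at infinity, so $I_2\le C\rho^{\mathfrak{s}-\nu/q}\|f\|_{\mathcal{M}^{p,q}_\mu}$.

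This tail estimate is the only step that uses the structural hypotheses in a nontrivial way, and it is where I would check the bookkeeping most carefully. Two points need care. First, the dyadic decomposition must start at $\rho$ rather than at a power of $2$; I handle this by starting the sum at the integer $k$ with $2^k\le\rho<2^{k+1}$, which only affects constants. Second, the constants depend on $\mathfrak{c}_1$, $\mathfrak{c}_2$, $\nu$, $q$ and $\mathfrak{s}$ through the geometric sums $(1-2^{\mathfrak{s}-\nu/q})^{-1}$ and $(1-2^{-\mathfrak{s}})^{-1}$.

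\textbf{Optimization.} Finally I balance the two pieces by choosing $\rho$ with $\rho^{\mathfrak{s}}\mathscr{M}_\mu(f)(x)=\rho^{\mathfrak{s}-\nu/q}\|f\|_{\mathcal{M}^{p,q}_\mu}$, that is
$$\rho=\left(\|f\|_{\mathcal{M}^{p,q}_\mu}/\mathscr{M}_\mu(f)(x)\right)^{q/\nu}.$$
The degenerate cases are easy. If $\mathscr{M}_\mu(f)(x)=+\infty$ there is nothing to prove. If $\mathscr{M}_\mu(f)(x)=0$ then $f=0$ a.e. and both sides vanish. Substituting this $\rho$ gives
$$|\mathcal{R}_{\mathfrak{s},\mu}(f)(x)|\le C\,\mathscr{M}_\mu(f)(x)^{1-\frac{q\mathfrak{s}}{\nu}}\|f\|_{\mathcal{M}^{p,q}_\mu}^{\frac{q\mathfrak{s}}{\nu}},$$
which is (\ref{PointWiseIneq_Feli}).
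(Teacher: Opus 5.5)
Your proposal is correct and follows the paper's proof essentially step for step. You use the same Hedberg splitting at a radius (the paper's $\mathcal{K}$), dyadic annuli bounded by the maximal function on the local part, H\"older plus the Morrey norm and Ahlfors bounds on the tail, and the same choice $\rho=(\|f\|_{\mathcal{M}^{p,q}_\mu}/\mathscr{M}_\mu(f)(x))^{q/\nu}$. The differences are cosmetic: the paper scales its annuli by $\mathcal{K}$ instead of using fixed powers of $2$, and you also treat the degenerate cases $\mathscr{M}_\mu(f)(x)\in\{0,+\infty\}$, which the paper leaves implicit.
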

\noindent We first remark here that if $X=\mathbb{R}^n$ (with its natural structure) and if $d\mu=dx$ is the usual Lebesgue measure, then we easily recover some previous results (see \emph{e.g.} \cite{ChMarcociMarcoci2}). Next note that if $p=q>1$ we obtain the estimate $|\mathcal{R}_{\mathfrak{s},\mu}(f)(x)|\leq C \mathscr{M}_\mu(f)(x)^{1-\frac{q\mathfrak{s}}{\nu}}\|f\|_{L^{q}(\mu)}^{\frac{q\mathfrak{s}}{\nu}}$ which is a generalization of the classical Hedberg inequality \cite{Hedberg} to metric measure spaces when the measure is regular in the sense of the Ahlfors condition (\ref{Ahlfors}). See also \cite[Theorem 2.3]{Sihwaningrum} for a similar result in a slightly different framework. Let us also mention that some boundedness properties of quite similar operators were studied in \cite{GG}, \cite{Samko}, and \cite{Sihwaningrum} or \cite{Sihwaningrum1}.\\

The previous result, while interesting in itself, is merely a pretext to obtain a pointwise control over the operators $T^*_K$ (applied to some suitable function $f$) by the maximal function of the upper gradient $g$ of $f$ and a Morrey norm of the upper gradient $g$. More precisely, we have:
\begin{Theoreme}[{\bf A new pointwise inequality}]\label{Theo_Pointwise_OpMorrey}
Consider $(X, d, \mu)$ a metric measure space endowed with a measure $\mu$ that satisfies the Ahlfors condition (\ref{Ahlfors}) with a power index $0<\nu<+\infty$ and constants $0<\mathfrak{c}_1\leq \mathfrak{c}_2<+\infty$ and that supports the weak Poincaré-Sobolev inequality (\ref{Poincare_Sobolev_Ineq}). Assume that $2^{1-\nu}\left(\tfrac{\mathfrak{c}_2}{\mathfrak{c}_1}\right)<1$ and let $f:X\longrightarrow \mathbb{R}$ be a measurable function. If $g:X\longrightarrow [0,+\infty]$ is an upper gradient of $f$ and if $g$ belongs to a Morrey space $\mathcal{M}^{p,q}_\mu(X)$ defined by the expression (\ref{Def_Morrey_space}) above for some parameters $1< p\leq q<+\infty$ such that $\frac{q}{\nu}<1$, then we have the following pointwise estimate
\begin{equation}\label{PointWiseIneq_UpperGradient}
T^*_K(f)(x)\leq C \mathscr{M}_\mu(g)(x)^{1-\frac{q}{\nu}}\|g\|_{\mathcal{M}_\mu^{p,q}}^{\frac{q}{\nu}}.
\end{equation}
\end{Theoreme}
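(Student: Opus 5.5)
This result is a direct composition of Theorem \ref{Theo_RoughOpEstimate} and Theorem \ref{Theo_Pointwise_Morrey}, taking $\mathfrak{s}=1$ in the latter. The plan is to first invoke Theorem \ref{Theo_RoughOpEstimate}. Its hypotheses are all contained in the present statement: $\mu$ is Ahlfors $\nu$-regular, $(X,d,\mu)$ supports the weak Poincaré-Sobolev inequality (\ref{Poincare_Sobolev_Ineq}), the technical condition (\ref{ConditionConstantes}) is assumed, and $K$ satisfies (\ref{TailleKernel})--(\ref{PseudoRadial}). This yields
$$T^*_K(f)(x)\leq C\,\mathcal{R}_{1,\mu}(g)(x).$$

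Some care is needed with the standing assumption in Theorem \ref{Theo_RoughOpEstimate} that $f$ and $g$ are locally integrable. For $g$ this follows from $g\in\mathcal{M}^{p,q}_\mu(X)$. By H\"older's inequality on a ball $B$, and since every compact set is contained in a ball,
$$\int_B g\,d\mu\leq \mu(B)^{1-\frac1p}\left(\int_B g^p d\mu\right)^{\frac1p}\leq \mu(B)^{1-\frac1p}\,\mu(B)^{\frac1p-\frac1q}\,\|g\|_{\mathcal{M}^{p,q}_\mu}<+\infty.$$
For $f$ I would either read local integrability as implicit in the statement (the right-hand side of the conclusion only involves $g$), or note that the Poincaré inequality gives local integrability of $f-f_B$ once $f$ is finite somewhere. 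I expect this bookkeeping to be the only delicate point. The substance lies entirely in the two earlier theorems.

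The second step is to apply Theorem \ref{Theo_Pointwise_Morrey} to the function $g$ with $\mathfrak{s}=1$. The required parameter conditions are $0<1<\nu$ and $1<\frac{\nu}{q}$. Both follow from the hypothesis $\frac{q}{\nu}<1$ together with $q\geq p>1$, since then $\nu>q>1$. Since $g\geq0$ we have $\mathcal{R}_{1,\mu}(g)=|\mathcal{R}_{1,\mu}(g)|$, and (\ref{PointWiseIneq_Feli}) gives
$$\mathcal{R}_{1,\mu}(g)(x)\leq C\,\mathscr{M}_\mu(g)(x)^{1-\frac{q}{\nu}}\,\|g\|_{\mathcal{M}^{p,q}_\mu}^{\frac{q}{\nu}}.$$

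Chaining the two displayed inequalities gives (\ref{PointWiseIneq_UpperGradient}). The constant $C$ depends only on $\nu,\mathfrak{c}_1,\mathfrak{c}_2,q$, the kernel constant in (\ref{TailleKernel}), and the Poincaré constants $C,\sigma$.
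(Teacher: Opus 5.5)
Your proposal is correct and coincides with the paper's proof, which simply chains Theorem~\ref{Theo_RoughOpEstimate} with Theorem~\ref{Theo_Pointwise_Morrey} at $\mathfrak{s}=1$. Your checks that $\nu>q>1$ and that $g\in L^1_{loc}$ via the Morrey bound are correct bookkeeping that the paper leaves implicit; the one caveat is that your alternative route to $f\in L^1_{loc}$ through the Poincar\'e inequality is not justified as written, since both $f_B$ and (\ref{Poincare_Sobolev_Ineq}) already presuppose $f\in L^1_{loc}$, so simply take local integrability of $f$ as implicit, as the paper does.
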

\noindent {\bf Proof.} This result is a consequence of the two previous theorems. Indeed, by the inequality (\ref{EstimationOperateur1}) we write $T^*_{K}(f)(x)\leq C \mathcal{R}_{1,\mu}\left(g\right)(x)$ then, since by the estimate (\ref{PointWiseIneq_Feli}) we have
$$\mathcal{R}_{1,\mu}\left(g\right)(x)\leq C \mathscr{M}_\mu(g)(x)^{1-\frac{q}{\nu}}\|g\|_{\mathcal{M}_\mu^{p,q}}^{\frac{q}{\nu}},$$
we easily derive the pointwise estimate 
$$T^*_K(f)(x)\leq C \mathscr{M}_\mu(g)(x)^{1-\frac{q}{\nu}}\|g\|_{\mathcal{M}_\mu^{p,q}}^{\frac{q}{\nu}},$$
and this ends the proof of the theorem. \hfill $\blacksquare$\\

\noindent As we can see, the pointwise inequality (\ref{PointWiseIneq_UpperGradient}) is a straightforward consequence of the Theorems \ref{Theo_RoughOpEstimate} and \ref{Theo_Pointwise_Morrey}. This control (which is, to the best of our knowledge completely new in this framework) will lead us to some interesting functional inequalities that will be presented in the Section \ref{Secc_Applications} below.\\

The plan of the article is the following. In Section \ref{Secc_Theo_RoughOpEstimate} we present the proof of the Theorem \ref{Theo_RoughOpEstimate}, while in Section \ref{Secc_ProofTheo_Pointwise_Morrey} we prove the Theorem \ref{Theo_Pointwise_Morrey} and, finally, in Section \ref{Secc_Applications} we will present some applications of the previous pointwise estimate (\ref{PointWiseIneq_UpperGradient}). The constants that appear in this paper, such as $C$, may change from one
occurrence to the next.
\mysection{Proof of the Theorem \ref{Theo_RoughOpEstimate}}\label{Secc_Theo_RoughOpEstimate}

Let us start by recalling the definition of the operator $T^*_{K}$ given in the expression  (\ref{MaximalOperator}) above: we thus have
$$T^*_{K}(f)(x)=\underset{\epsilon>0}{\sup}\left|\int_{\{\epsilon<d(x,y)\}}K(x,y)f(y)d\mu(y)\right|,$$
and for some $\epsilon>0$ we consider now the operator
$$T^\epsilon_{K}(f)(x)=\int_{\{\epsilon<d(x,y)\}}K(x,y)f(y)d\mu(y).$$
We recall that we have $T^*_{K}(f)(x)=\underset{\epsilon>0}{\sup}|T^\epsilon_{K}(f)(x)|$ and that $|T_K(f)(x)|\leq T^*_{K}(f)(x)$.\\

Now, for a function $f\in L^1_{loc}(X)$ and for some $k_0\in \mathbb{Z}$ so that $2^{k_0-2}<\epsilon \leq 2^{k_0-1}$, we write
$$T^\epsilon_{K}(f)(x)=\int_{\{\epsilon<d(x,y)\leq 2^{k_0-1}\}}K(x,y)f(y)d\mu(y)+\sum_{k\geq k_0}\int_{\{2^{k-1}<d(x,y)\leq 2^{k}\}}K(x,y)f(y)d\mu(y).$$
If we assume the  pseudo-radially null condition (\ref{PseudoRadial}), we can introduce some constants, that we define later on, in the previous expression to obtain
$$T^\epsilon_{K}(f)(x)=\int_{\{\epsilon<d(x,y)\leq 2^{k_0-1}\}}K(x,y)(f(y)-c_{k_0})d\mu(y)+\sum_{k\geq k_0}\int_{\{2^{k-1}<d(x,y)\leq 2^{k}\}}K(x,y)(f(y)-c_k)d\mu(y),$$
from which we deduce the inequality
$$|T^\epsilon_{K}(f)(x)|\leq\sum_{k\in\mathbb{Z}}\int_{\{2^{k-1}<d(x,y)\leq 2^{k}\}}|K(x,y)(f(y)-c_k)|d\mu(y).$$
Now, since by the hypothesis (\ref{TailleKernel}) we have the control $|K(x,y)|\leq \frac{C}{d(x,y)^\nu}$ for all $x\neq y$, we can write (recalling that we integrate over the set $\{2^{k-1}<d(x,y)\leq 2^{k}\}$)
\begin{eqnarray*}
|T^\epsilon_{K}(f)(x)|&\leq& C\sum_{k\in\mathbb{Z}}\int_{\{2^{k-1}<d(x,y)\leq 2^{k}\}}\frac{1}{d(x,y)^\nu}|(f(y)-c_k)|d\mu(y)\\
&\leq &C\sum_{k\in\mathbb{Z}}\int_{\{2^{k-1}<d(x,y)\leq 2^{k}\}}\frac{1}{(2^{k-1})^\nu}|(f(y)-c_k)|d\mu(y),
\end{eqnarray*}
 which we rewrite as follows 
$$|T^\epsilon_{K}(f)(x)|\leq C 2^\nu\sum_{k\in\mathbb{Z}}\int_{\{d(x,y)\leq 2^{k}\}}\frac{1}{2^{k\nu}}|(f(y)-c_k)|d\mu(y). $$
Now, if we set two parameters $1<\rho, \rho'<+\infty$, where $\rho'$ is such that $\rho'=\frac{\rho}{\rho-1}<\nu$, then by the H\"older inequality with $\frac{1}{\rho}+\frac{1}{\rho'}=1$, we write
$$|T^\epsilon_{K}(f)(x)|\leq C2^\nu \sum_{k\in \mathbb{Z}}\frac{1}{2^{k\nu}} \left(\mu(B(x,2^k))\right)^{\frac{1}{\rho}}\left(\int_{\{d(x,y)\leq 2^{k}\}}|f(y)-c_k|^{\rho'}d\mu(y)\right)^{\frac{1}{\rho'}}.$$
Since the measure $\mu$ is upper Ahlfors regular, we have $\mu(B(x,2^k))\leq \mathfrak{c}_2 2^{k\nu}$ and we obtain
\begin{eqnarray*}
|T^\epsilon_{K}(f)(x)|&\leq &C2^\nu\mathfrak{c}_2^{\frac{1}{\rho}}\sum_{k\in \mathbb{Z}}\frac{1}{2^{k\nu}} 2^{k\frac{\nu}{\rho}}\left(\int_{\{d(x,y)\leq 2^{k}\}}|f(y)-c_k|^{\rho'}d\mu(y)\right)^{\frac{1}{\rho'}}\\
&\leq &C2^\nu\mathfrak{c}_2^{\frac{1}{\rho}}\sum_{k\in \mathbb{Z}}\frac{1}{2^{k\nu(1-\frac{1}{\rho})}}\left(\int_{\{d(x,y)\leq 2^{k}\}}|f(y)-c_k|^{\rho'}d\mu(y)\right)^{\frac{1}{\rho'}},
\end{eqnarray*}
and we have (since $1-\frac{1}{\rho}=\frac{1}{\rho'}$):
\begin{eqnarray*}
|T^\epsilon_{K}(f)(x)|&\leq&C2^\nu\mathfrak{c}_2^{\frac{1}{\rho}}\sum_{k\in \mathbb{Z}}\frac{1}{2^{k\frac{\nu}{\rho'}}}\left(\int_{\{d(x,y)\leq 2^{k}\}}|f(y)-c_k|^{\rho'}d\mu(y)\right)^{\frac{1}{\rho'}}\\
&\leq&C2^\nu\mathfrak{c}_2^{\frac{1}{\rho}}\sum_{k\in \mathbb{Z}}\left(\frac{1}{2^{k\nu}}\int_{\{d(x,y)\leq 2^{k}\}}|f(y)-c_k|^{\rho'}d\mu(y)\right)^{\frac{1}{\rho'}},
\end{eqnarray*}
using again the fact that the measure $\mu$ is upper Ahlfors regular, we have $\frac{1}{ 2^{k\nu}}\leq \mathfrak{c}_2\frac{1}{\mu(B(x,2^k))}$ and we obtain
$$|T^\epsilon_{K}(f)(x)|\leq C2^\nu\mathfrak{c}_2\sum_{k\in \mathbb{Z}}\left(\frac{1}{\mu(B(x,2^k))}\int_{B(x,2^k)}|f(y)-c_k|^{\rho'}d\mu(y)\right)^{\frac{1}{\rho'}}.$$
We fix the constant $c_k=f_{B_k}=\frac{1}{\mu(B(x,2^k))}\displaystyle{\int_{B(x,2^k)}f(y)d\mu(y)}$, so we can write 
$$T^\epsilon_{K}(f)(x)\leq C\sum_{k\in \mathbb{Z}}\left(\frac{1}{\mu(B(x,2^k))}\int_{B(x,2^k)}|f(y)-f_{B_k}|^{\rho'}d\mu(y)\right)^{\frac{1}{\rho'}}.$$
Now we apply the Poincaré-Sobolev inequality given in (\ref{Poincare_Sobolev_Ineq}) in order to obtain (with the values $1=\mathfrak{s}<\rho'<\nu$):
\begin{eqnarray}
|T^\epsilon_{K}(f)(x)|&\leq &C2^\nu\mathfrak{c}_2\sum_{k\in \mathbb{Z}}\left(\frac{1}{\mu(B(x,2^k))}\int_{B(x,2^k)}|f(y)-f_{B_k}|^{\rho'}d\mu(y)\right)^{\frac{1}{\rho'}}\notag\\
&\leq &C2^\nu\mathfrak{c}_2\sum_{k\in \mathbb{Z}}2^k \frac{1}{\mu(B(x,\sigma2^k))}\int_{B(x,\sigma2^k)} g(y)d\mu(y),\label{SumPointWiseInequality}
\end{eqnarray}
where $g$ is an upper gradient of $f$ in the sense of (\ref{Def_UpperGradient}). We study now the sum in the previous formula:
$$\mathscr{S}=\sum_{k\in \mathbb{Z}}2^k\frac{1}{\mu(B(x,\sigma2^k))}\int_{B(x,\sigma2^k)}g(y) d\mu(y),$$
(note that we thus have $|T^\epsilon_{K}(f)(x)|\leq C\mathscr{S}$) which we rewrite as follows
\begin{eqnarray}
\mathscr{S}&\leq &\underbrace{\sum_{k\in \mathbb{Z}}\frac{2^{k}}{\mu(B(x,\sigma2^k))}\int_{\{\sigma2^{k-1}<d(x,y)\leq \sigma2^{k}\}}g(y) d\mu(y)}_{(A)}\notag\\
&&+\underbrace{\sum_{k\in \mathbb{Z}} \frac{2^{k}}{\mu(B(x,\sigma2^k))}\int_{\{d(x,y)\leq \sigma2^{k-1}\}} g(y) d\mu(y)}_{(B)}.\label{FormuleAB}
\end{eqnarray}
We will estimate each of the previous terms separately. 
\begin{itemize}
\item For the term $(A)$ above, we consider first the quantity
$$\frac{2^{k}}{\mu(B(x, \sigma2^k))}\int_{\{\sigma2^{k-1}<d(x,y)\leq \sigma2^{k}\}}g(y)d\mu(y),$$
but since we are working over the sets $\sigma2^{k-1}<d(x,y)\leq \sigma 2^{k}$, we easily deduce the control $\sigma2^{k}\leq Cd(x,y)$ and we can write
\begin{eqnarray*}
\frac{2^{k}}{\mu(B(x, \sigma2^k))}\int_{\{\sigma2^{k-1}<d(x,y)\leq \sigma2^{k}\}}g(y)d\mu(y)\\
\leq \frac{C }{ \sigma\mu(B(x, \sigma2^k))}\int_{\{\sigma2^{k-1}<d(x,y)\leq \sigma2^{k}\}}d(x,y) g(y) d\mu(y).
\end{eqnarray*}
Since over the set $\{\sigma2^{k-1}<d(x,y)\leq \sigma2^{k}\}$ we have $d(x,y)\leq \sigma2^k$, then we obtain $\frac{1}{\mu(B(x, \sigma2^k))}\leq \frac{1}{\mu(B(x, d(x,y)))}$, and we can write
\begin{eqnarray*}
\frac{C}{\mu(B(x, \sigma2^k))}\int_{\{\sigma2^{k-1}<d(x,y)\leq \sigma2^{k}\}}d(x,y)g(y)d\mu(y)\\
\leq  C\int_{\{\sigma2^{k-1}<d(x,y)\leq \sigma2^{k}\}}\frac{d(x,y)}{\mu(B(x, d(x,y)))}g(y)d\mu(y).
\end{eqnarray*}
With this estimate at our disposal, we obtain
\begin{eqnarray*}
(A)&=&\sum_{k\in \mathbb{Z}}\frac{2^{k}}{\mu(B(x,\sigma2^k))}\int_{\{\sigma2^{k-1}<d(x,y)\leq \sigma2^{k}\}}g(y)d\mu(y)\\
&\leq &\frac{C}{ \sigma}\sum_{k\in \mathbb{Z}}\int_{\{\sigma2^{k-1}<d(x,y)\leq \sigma2^{k}\}}\frac{d(x,y)}{\mu(B(x, d(x,y)))}g(y) d\mu(y)\\
&\leq&\frac{C}{ \sigma} \int_{X}\frac{d(x,y)}{\mu(B(x, d(x,y)))}g(y)d\mu(y).
\end{eqnarray*}
Now, recalling the definition of the operator $\mathcal{R}_{1,\mu}$ given in (\ref{Def_operatorRieszLike}) above, we finally obtain the estimate 
$$(A)\leq \frac{C}{ \sigma} \mathcal{R}_{1,\mu}\left(g\right)(x).$$
\item For the term (B) in (\ref{FormuleAB}) we consider the quantity:
$$(B)=\sum_{k\in \mathbb{Z}}\frac{2^{k}}{\mu(B(x,\sigma2^k))}\int_{\{d(x,y)\leq \sigma2^{k-1}\}} g(y)d\mu(y).$$
Due to the upper and lower Ahlfors regularity of the measure $\mu$ given by the conditions (\ref{Ahlfors}) we have
$$\mu(B(x, \sigma2^{k-1}))\leq \mathfrak{c}_2  2^{-\nu}(\sigma 2^k)^\nu\leq \frac{\mathfrak{c}_2}{\mathfrak{c}_1}2^{-\nu}\; \mathfrak{c}_1(\sigma 2^k)^\nu\leq \frac{\mathfrak{c}_2}{\mathfrak{c}_1}2^{-\nu} \mu(B(x, \sigma 2^k)),$$
so we can write
$$\frac{1}{\mu(B(x,\sigma2^k))}\leq  2^{-\nu}\frac{\mathfrak{c}_2}{\mathfrak{c}_1}\frac{1}{\mu( B(x, \sigma 2^{k-1}))},$$
and we have
\begin{eqnarray*}
(B)&=&\sum_{k\in \mathbb{Z}}\frac{2^{k}}{\mu(B(x,\sigma2^k))}\int_{\{d(x,y)\leq \sigma2^{k-1}\}} g(y)d\mu(y)\\
&\leq&\sum_{k\in \mathbb{Z}}2^{k} \; 2^{-\nu}\frac{\mathfrak{c}_2}{\mathfrak{c}_1}\;\frac{1}{\mu( B(x, \sigma2^{k-1})}\int_{\{d(x,y)\leq \sigma2^{k-1}\}} g(y) d\mu(y).
\end{eqnarray*}
We now obtain the estimate
$$(B)\leq\sum_{k\in \mathbb{Z}}2^{1-\nu} \left(\frac{\mathfrak{c}_2}{\mathfrak{c}_1}\right)\;2^{(k-1)}\frac{1}{\mu(B(x, \sigma2^{k-1}))}\int_{\{d(x,y)\leq \sigma2^{k-1}\}} g(y)d\mu(y),$$
which we rewrite as
$$(B)\leq 2^{1-\nu} \left(\frac{\mathfrak{c}_2}{\mathfrak{c}_1}\right) \sum_{k\in \mathbb{Z}}\;2^{(k-1)}\frac{1}{\mu( B(x, \sigma2^{k-1}))}\int_{\{d(x,y)\leq \sigma2^{k-1}\}} g(y)d\mu(y),$$
and we thus have
\begin{eqnarray*}
(B)&\leq &2^{1-\nu} \left(\frac{\mathfrak{c}_2}{\mathfrak{c}_1}\right)\sum_{k\in \mathbb{Z}}\;2^{k}\frac{1}{\mu(B(x, \sigma2^{k}))}\int_{B(x, \sigma2^{k})} g(y) d\mu(y).\\[3mm]
\end{eqnarray*}
\end{itemize}
With these estimates for the terms $(A)$ and $(B)$, getting back to (\ref{FormuleAB}) we can write 
\begin{eqnarray*}
\mathscr{S}&\leq & \frac{C}{\sigma}\mathcal{R}_{1,\mu}\left(g(x)\right) + 2^{1-\nu} \left(\frac{\mathfrak{c}_2}{\mathfrak{c}_1}\right) \sum_{k\in \mathbb{Z}}\;2^{k}\frac{1}{\mu( B(x, \sigma2^{k}))}\int_{B(x, \sigma2^{k})} g(y) d\mu(y)\\
&\leq & \frac{C}{\sigma}\mathcal{R}_{1,\mu}\left(g(x)\right)+ 2^{1-\nu} \left(\frac{\mathfrak{c}_2}{\mathfrak{c}_1}\right)\mathscr{S},
\end{eqnarray*}
but since we have by hypothesis (\ref{ConditionConstantes}) the constraint $2^{1-\nu}\left(\frac{\mathfrak{c}_2}{\mathfrak{c}_1}\right)<1$, we obtain
$$\mathscr{S}\left(1-2^{1-\nu}\left(\frac{\mathfrak{c}_2}{\mathfrak{c}_1}\right)\right)\leq \frac{C}{\sigma} \mathcal{R}_{1,\mu}\left(g\right)(x),$$
which we rewrite as 
$$\mathscr{S}\leq C'\mathcal{R}_{1,\mu}\left(g\right)(x).$$
Thus, coming back to (\ref{SumPointWiseInequality}) we have
$$|T^\epsilon_{K}(f)(x)|\leq C\mathcal{R}_{1,\mu}\left(g\right)(x),$$
and from this estimate (which is uniform in $\epsilon>0$) we deduce the wished control
$$|T^*_{K}(f)(x)|\leq C \mathcal{R}_{1,\mu}\left(g\right)(x),$$
and this ends the proof of the Theorem \ref{Theo_RoughOpEstimate}. \hfill $\blacksquare$
\mysection{Proof of the Theorem \ref{Theo_Pointwise_Morrey}}\label{Secc_ProofTheo_Pointwise_Morrey}
Our starting point is the definition of the operator $\mathcal{R}_{\mathfrak{s},\mu}$ given in the formula (\ref{Def_operatorRieszLike}) above, indeed we have:
$$\mathcal{R}_{\mathfrak{s},\mu}(f)(x)=\int_{X}\frac{d(x,y)^{\mathfrak{s}}}{\mu(B(x,d(x,y)))}f(y)d\mu(y).$$
By considering a positive parameter $\mathcal{K}>0$ that will be fixed later we can write
\begin{eqnarray}
|\mathcal{R}_{\mathfrak{s},\mu}(f)(x)|&\leq&\int_{\{d(x,y)<\mathcal{K}\}}\frac{d(x,y)^{\mathfrak{s}}}{\mu(B(x,d(x,y)))}|f(y)|d\mu(y)+\int_{\{d(x,y)\geq \mathcal{K}\}}\frac{d(x,y)^{\mathfrak{s}}}{\mu(B(x,d(x,y)))}|f(y)|d\mu(y)\notag\\[3mm]
&=&\mathcal{R}_1+\mathcal{R}_2.\label{DeuxIntegrales1}
\end{eqnarray}
The quantity $\mathcal{R}_1$ above is treated in the following manner:
\begin{eqnarray*}
\mathcal{R}_1&=&\int_{\{d(x,y)<\mathcal{K}\}}\frac{d(x,y)^{\mathfrak{s}}}{\mu(B(x,d(x,y)))}|f(y)|d\mu(y)\\
&=&\sum_{j=0}^{+\infty}\int_{\{\mathcal{K}2^{-(j+1)}<d(x,y)<\mathcal{K}2^{-j}\}}\left(\frac{d(x,y)^{\mathfrak{s}}}{\mu(B(x,d(x,y)))}\right)|f(y)|d\mu(y).
\end{eqnarray*}
Noting that over the set $\{\mathcal{K}2^{-(j+1)}<d(x,y)<\mathcal{K}2^{-j}\}$ we have the control 
$$d(x,y)^\mathfrak{s}\leq \mathcal{K}^{ \mathfrak{s}} 2^{-\mathfrak{s} j},$$ 
and since we have the set inclusion $B(x, \mathcal{K}2^{-(j+1)})\subset B(x,d(x,y))$ if $\mathcal{K}2^{-(j+1)}<d(x,y)$, then we can write 
$$\mu(B(x,\mathcal{K} 2^{-(j+1)}))\leq \mu(B(x,d(x,y))),$$ 
and using the lower Ahlfors condition (\ref{Ahlfors}) we obtain
$$\mathfrak{c}_1(\mathcal{K} 2^{-(j+1)})^\nu\leq \mu(B(x,\mathcal{K} 2^{-(j+1)}))\leq \mu(B(x,d(x,y))),$$ 
from which we derive the inequalities
$$\frac{d(x,y)^{\mathfrak{s}}}{\mu(B(x,d(x,y)))}\leq \frac{\mathcal{K}^\mathfrak{s} 2^{-\mathfrak{s} j}}{\mu(B(x,\mathcal{K} 2^{-(j+1)}))}\leq \frac{\mathcal{K}^\mathfrak{s} 2^{-\mathfrak{s} j}}{\mathfrak{c}_1(\mathcal{K} 2^{-(j+1)})^\nu}=\frac{2^{\nu}}{\mathfrak{c}_1}\mathcal{K}^\mathfrak{s} 2^{-\mathfrak{s} j}\frac{1}{(\mathcal{K} 2^{-j})^\nu}.$$
Noting now that by the upper Ahlfors condition (\ref{Ahlfors}) we have the estimate $\frac{1}{(\mathcal{K} 2^{-j})^\nu}\leq \frac{\mathfrak{c}_2}{\mu(B(x,\mathcal{K} 2^{-j}))}$, we thus obtain the inequality
$$\frac{d(x,y)^{\mathfrak{s}}}{\mu(B(x,d(x,y)))}\leq \frac{2^{\nu}}{\mathfrak{c}_1}\mathcal{K}^\mathfrak{s} 2^{-\mathfrak{s} j}\frac{\mathfrak{c}_2}{\mu(B(x,\mathcal{K} 2^{-j}))}.$$
With this control at hand we can then write
\begin{eqnarray*}
\mathcal{R}_1&\leq& \sum_{j=0}^{+\infty} \int_{\{ \mathcal{K} 2^{-(j+1)}<d(x,y)< \mathcal{K}2^{-j}\}}\left(\frac{2^{\nu}}{\mathfrak{c}_1}\mathcal{K}^{\mathfrak{s}} 2^{-\mathfrak{s} j}\frac{\mathfrak{c}_2}{\mu(B(x,\mathcal{K} 2^{-j}))}\right)|f(y)|d\mu(y)\\
&\leq & \frac{2^{\nu}\mathfrak{c}_2}{\mathfrak{c}_1}\sum_{j=0}^{+\infty} \mathcal{K}^\mathfrak{s} 2^{-\mathfrak{s} j} \frac{1}{\mu(B(x,\mathcal{K} 2^{-j}))}\int_{B(x,\mathcal{K} 2^{-j})} |f(y)|d\mu(y)\\
&\leq & \frac{2^{\nu}\mathfrak{c}_2}{\mathfrak{c}_1}\sum_{j=0}^{+\infty} \mathcal{K}^\mathfrak{s} 2^{-\mathfrak{s} j} \mathscr{M}_\mu(f)(x)=\frac{2^{\nu}\mathfrak{c}_2}{\mathfrak{c}_1} \mathcal{K}^\mathfrak{s}  \mathscr{M}_\mu(f)(x)\sum_{j=0}^{+\infty}2^{-\mathfrak{s} j},
\end{eqnarray*}
where in the last estimate we used the definition of the maximal functions $\mathscr{M}_\mu$ given in the expression (\ref{Def1Maximalfunctions}) above. We now obtain the control
from which we easily deduce the following estimate for the first integral of (\ref{DeuxIntegrales1}):
\begin{equation}\label{EstimationI11}
\mathcal{R}_1\leq C \mathcal{K}^\mathfrak{s} \mathscr{M}_\mu(f)(x),
\end{equation}
where $C=C(\mathfrak{s}, \mathfrak{c}_1, \mathfrak{c}_2, \nu)$.\\

Now, for the quantity $\mathcal{R}_2$ in (\ref{DeuxIntegrales1}) we write
$$\mathcal{R}_2=\int_{\{d(x,y)\geq \mathcal{K}\}}\frac{d(x,y)^{\mathfrak{s}}}{\mu(B(x,d(x,y)))}|f(y)|d\mu(y)=\sum_{j=0}^{+\infty}\int_{\{\mathcal{K}2^j\leq d(x,y)\leq \mathcal{K} 2^{j+1}\}}\frac{d(x,y)^{\mathfrak{s}}}{\mu(B(x,d(x,y)))}|f(y)|d\mu(y).$$
Since over the set $\{\mathcal{K}2^j\leq d(x,y)\leq \mathcal{K} 2^{j+1}\}$ we have the control $d(x,y)^{\mathfrak{s}}\leq \mathcal{K}^\mathfrak{s} 2^{(j+1)\mathfrak{s}}$ as well as the set inclusion 
$B(x, \mathcal{K}2^j)\subset B(x, d(x,y))$, we obtain $\mu(B(x, \mathcal{K}2^j))\leq \mu(B(x, d(x,y)))$ and we have 
$$\frac{d(x,y)^{\mathfrak{s}}}{\mu(B(x,d(x,y)))}\leq \frac{\mathcal{K}^\mathfrak{s} 2^{(j+1)\mathfrak{s}}}{\mu(B(x, \mathcal{K}2^j))},$$
so we can write 
\begin{eqnarray*}
\mathcal{R}_2&\leq &\sum_{j=0}^{+\infty} \frac{\mathcal{K}^\mathfrak{s} 2^{(j+1)\mathfrak{s}}}{\mu(B(x,\mathcal{K}2^j))}\int_{\{\mathcal{K}2^j\leq d(x,y)\leq \mathcal{K} 2^{j+1}\}} |f(y)|d\mu(y)\\
&\leq &\sum_{j=0}^{+\infty} \frac{\mathcal{K}^\mathfrak{s} 2^{(j+1)\mathfrak{s}}}{\mu(B(x,\mathcal{K}2^j))}\int_{\{d(x,y)\leq \mathcal{K} 2^{j+1}\}}|f(y)|d\mu(y),
\end{eqnarray*}
which can be rewritten in the following manner
$$\mathcal{R}_2\leq C\mathcal{K}^\mathfrak{s} \sum_{j=0}^{+\infty} 2^{(j+1)\mathfrak{s}}\frac{1}{\mu(B(x,\mathcal{K}2^j))}\int_{B(x,\mathcal{K} 2^{j+1})}|f(y)|d\mu(y).$$
Now, with the H\"older inequality with $\frac{1}{p}+\frac{1}{p'}=1$ and $1< p<+\infty$, we have
\begin{eqnarray*}
\mathcal{R}_2&\leq &C\mathcal{K}^\mathfrak{s} \sum_{j=0}^{+\infty} 2^{(j+1)\sigma} \frac{1}{\mu(B(x,\mathcal{K}2^j))}\left(\int_{B(x, \mathcal{K} 2^{j+1})}|f(y)|^p d\mu(y)\right)^{\frac{1}{p}}\left(\int_{B(x, \mathcal{K} 2^{j+1})}d\mu(y)\right)^{\frac{1}{p'}}\\
&\leq &C\mathcal{K}^\mathfrak{s} \sum_{j=0}^{+\infty} 2^{(j+1)\mathfrak{s}} \frac{1}{\mu(B(x,\mathcal{K} 2^j))}\left(\int_{B(x, \mathcal{K} 2^{j+1})}|f(y)|^p d\mu(y)\right)^{\frac{1}{p}}\mu(B(x, \mathcal{K} 2^{j+1}))^{\frac{1}{p'}}.
\end{eqnarray*}
Now, we rewrite the previous expression as follows (where $1< p\leq q<+\infty$):
\begin{eqnarray*}
\mathcal{R}_2&\leq &C\mathcal{K}^\mathfrak{s} \sum_{j=0}^{+\infty} 2^{(j+1)\mathfrak{s}} \frac{\mu(B(x, \mathcal{K} 2^{j+1}))^{\frac{1}{p'}}}{\mu(B(x,\mathcal{K}2^j))}\\
&&\times \mu(B(x, \mathcal{K} 2^{j+1}))^{\frac{1}{p}-\frac{1}{q}}\left(\frac{1}{\mu(B(x, \mathcal{K} 2^{j+1}))^{1-\frac{p}{q}}}\int_{B(x, \mathcal{K} 2^{j+1})}|f(y)|^p d\mu(y)\right)^{\frac{1}{p}}\\
&\leq &C\mathcal{K}^\mathfrak{s} \sum_{j=0}^{+\infty} 2^{(j+1)\mathfrak{s}} \frac{\mu(B(x, \mathcal{K} 2^{j+1}))^{1-\frac{1}{q}}}{\mu(B(x,\mathcal{K} 2^j))}\left(\frac{1}{\mu(B(x, \mathcal{K} 2^{j+1}))^{1-\frac{p}{q}}}\int_{B(x, \mathcal{K} 2^{j+1})}|f(y)|^p d\mu(y)\right)^{\frac{1}{p}},
\end{eqnarray*}
and thus, using the definition of the Morrey spaces $\mathcal{M}_\mu^{p,q}(X)$ given in (\ref{Def_Morrey_space}), we obtain
$$\mathcal{R}_2\leq C\mathcal{K}^\mathfrak{s} \|f\|_{\mathcal{M}_\mu^{p,q}}\sum_{j=0}^{+\infty} 2^{(j+1)\mathfrak{s}} \frac{\mu(B(x, \mathcal{K} 2^{j+1}))^{1-\frac{1}{q}}}{\mu(B(x,\mathcal{K}2^j))}.$$
Using again the upper and lower Ahlfors condition (\ref{Ahlfors}), we observe that we have
$$\frac{\mu(B(x, \mathcal{K} 2^{j+1}))^{1-\frac{1}{q}}}{\mu(B(x,\mathcal{K}2^j))}\leq \frac{\mathfrak{c}_2 ( \mathcal{K} 2^{j+1})^{\nu(1-\frac{1}{q})}}{\mathfrak{c}_1(\mathcal{K} 2^{j})^\nu}=\frac{\mathfrak{c}_2}{\mathfrak{c}_1}\mathcal{K}^{-\frac{\nu}{q}}2^{\nu(1-\frac{1}{q})}2^{-j\frac{\nu}{q}},$$
so we can write 
\begin{eqnarray*}
\mathcal{R}_2&\leq &C\mathcal{K}^\mathfrak{s} \|f\|_{\mathcal{M}_\mu^{p,q}}\sum_{j=0}^{+\infty}2^{(j+1)\mathfrak{s}} \frac{\mu(B(x, \mathcal{K} 2^{j+1}))^{1-\frac{1}{q}}}{\mu(B(x,\mathcal{K}2^j))}\\
&\leq &  C\mathcal{K}^\mathfrak{s} \|f\|_{\mathcal{M}_\mu^{p,q}}\frac{\mathfrak{c}_2}{\mathfrak{c}_1}\mathcal{K}^{-\frac{\nu}{q}}2^{\nu(1-\frac{1}{q})}\sum_{j=0}^{+\infty} 2^{-j\frac{\nu}{q}+(j+1)\mathfrak{s}}=C\mathcal{K}^{\mathfrak{s}-\frac{\nu}{q}} \|f\|_{\mathcal{M}_\mu^{p,q}}\frac{\mathfrak{c}_2}{\mathfrak{c}_1}2^{\nu(1-\frac{1}{q})+\mathfrak{s}}\sum_{j=0}^{+\infty} 2^{-j(\frac{\nu}{q}-\mathfrak{s})},
\end{eqnarray*}
and since the previous sum converges (recall that we have $\mathfrak{s}<\frac{\nu}{q}$)  we finally obtain
\begin{equation}\label{EstimationI12}
\mathcal{R}_2\leq  C'\mathcal{K}^{\mathfrak{s}-\frac{\nu}{q}} \|f\|_{\mathcal{M}_\mu^{p,q}},
\end{equation}
with $C'=C(\mathfrak{s}, \mathfrak{c}_1, \mathfrak{c}_2, \nu,q)$.\\

With the estimates (\ref{EstimationI11}) and (\ref{EstimationI12}) at hand, we can come back to the expression (\ref{DeuxIntegrales1}) and we obtain
$$|\mathcal{R}_{\mathfrak{s},\mu}(f)(x)|\leq \mathcal{R}_1+\mathcal{R}_2\leq C \mathcal{K}^\mathfrak{s} \mathscr{M}_\mu(f)(x)+C' \mathcal{K}^{\mathfrak{s}-\frac{\nu}{q}}\|f\|_{\mathcal{M}_\mu^{p,q}}.$$
To continue, we set $\mathcal{K}=\left(\frac{\mathscr{M}_\mu(f)(x)}{\|f\|_{\mathcal{M}_\mu^{p,q}}}\right)^{-\frac{q}{\nu}}$, and we have 
$$|\mathcal{R}_{\mathfrak{s},\mu}(f)(x)|\leq C \mathscr{M}_\mu(f)(x)^{1-\frac{q\mathfrak{s}}{\nu}}\|f\|_{\mathcal{M}_\mu^{p,q}}^{\frac{q\mathfrak{s}}{\nu}},$$
which is the wished inequality and this ends the proof of the Theorem \ref{Theo_Pointwise_Morrey}.\hfill $\blacksquare$
\mysection{Functional inequalities}\label{Secc_Applications}
Once we have at our disposal the pointwise estimate (\ref{PointWiseIneq_UpperGradient}) it is a \emph{relatively} easy task to deduce a family of functional inequalities. First we consider  for some real parameter $\mathfrak{r}$, a functional space $(E^\mathfrak{r}, \|\cdot\|_{E^\mathfrak{r}})$ given by the condition
$$E^\mathfrak{r}(X)=\{f:X\longrightarrow \mathbb{R}, f \mbox{ is measurable and } \|f\|_{E^\mathfrak{r}}<+\infty\},$$
where $\|\cdot\|_{E^\mathfrak{r}}$ is a ``norm'' which depends on the parameter $\mathfrak{r}$. We can assume for simplicity that $1<\mathfrak{r}<+\infty$ - the typical example of such space is the Lebesgue space $L^\mathfrak{r}(X)$ endowed with the usual norm $\|\cdot\|_{L^\mathfrak{r}}$. If the quantity $\|\cdot\|_{E^\mathfrak{r}}$ satisfies the following conditions: \\
\begin{itemize}
\item if we have  two measurable functions $f,g:X\longrightarrow \mathbb{R}$ with the  (\emph{a.e.}) pointwise inequality $|f(x)|\leq |g(x)|$ valid, such that $f,g\in E^{\mathfrak{r}}(X)$ then 
\begin{equation}\label{Monotony}
\|f\|_{E^\mathfrak{r}}\leq \|g\|_{E^\mathfrak{r}}.
\end{equation}
\item for some real positive power $\rho$, such that $\rho\, \mathfrak{r}>1$, we have the identity 
\begin{equation}\label{Puissance}
\||f|^\rho\|_{E^\mathfrak{r}}=\|f\|_{E^{\rho\mathfrak{r}}}^\rho,
\end{equation}
whenever $f\in E^{\rho\mathfrak{r}}(X)$. Note in particular that for the usual Lebesgue spaces we have $\||f|^\rho\|_{L^\mathfrak{r}}=\|f\|_{L^{\rho\mathfrak{r}}}^\rho$ and also for the Morrey spaces given in (\ref{Def_Morrey_space}) we have the identity $\||f|^\rho\|_{\mathcal{M}^{p,q}_\mu}=\|f\|_{\mathcal{M}^{\rho p, \rho q}_\mu}^\rho$.
\item for a generic maximal operator $\mathscr{M}_\mu$ we have the boundedness  in the spaces $E^{\mathfrak{r}}(X)$. 
\begin{equation}\label{GeneralMaximal}
\|\mathscr{M}_\mu(f)\|_{E^\mathfrak{r}}\leq C\|f\|_{E^{\mathfrak{r}}}.
\end{equation}
\end{itemize}
Now if we consider the generic pointwise estimate
$$T^*_K(f)(x)\leq C \mathscr{M}_\mu(g)(x)^{1-\frac{q}{\nu}}\|g\|_{\mathcal{M}_\mu^{p,q}}^{\frac{q}{\nu}},$$
(where the indexes $ \nu, p, q$ are given in the context of the Theorem \ref{Theo_Pointwise_OpMorrey})
we can use the property (\ref{Monotony}) to obtain: 
$$\|T^*_K(f)\|_{E^\mathfrak{r}}\leq C \left\|\mathscr{M}_\mu(g)^{1-\frac{q}{\nu}}\right\|_{E^\mathfrak{r}}\|g\|_{\mathcal{M}^{p,q}_\mu}^{\frac{q}{\nu}},$$
and then by the property (\ref{Puissance}) for the space $E^\mathfrak{r}(X)$ we have 
$$\|T^*_K(f)\|_{E^\mathfrak{r}}\leq C\left\|\mathscr{M}_\mu(g)\right\|_{E^{(1-\frac{q}{\nu})\mathfrak{r}}}^{1-\frac{q}{\nu}}\|g\|_{\mathcal{M}^{p,q}_\mu}^{\frac{q}{\nu}}.$$
Next, if the maximal operator $\mathscr{M}_\mu$ is bounded in the space $E^{(1-\frac{q}{\nu})\mathfrak{r}}(X)$ then by the property (\ref{GeneralMaximal}) we have that

\begin{equation}\label{GeneralFuncIneq}
\|T^*_K(f)\|_{E^\mathfrak{r}}\leq C\|g\|_{E^{(1-\frac{q}{\nu})\mathfrak{r}}}^{1-\frac{q}{\nu}}\|g\|_{\mathcal{M}^{p,q}_\mu}^{\frac{q}{\nu}},
\end{equation}
which is the functional inequality that we want to establish (of course, as long as we have that the upper gradient $g$ satisfies $g\in E^{(1-\frac{q}{\nu})\mathfrak{r}}(X)$ and $g\in \mathcal{M}^{p,q}_\mu(X)$). Let us insist that we did not specify in the inequality above the relationship between the indexes $\mathfrak{r}$, $\nu$ or $p,q$ and thus the previous functional inequality must be understood as a ``theoretical'' and fairly general estimate: the relationship between these indexes must be understood in the setting of the Theorem \ref{Theo_Pointwise_OpMorrey} and also must be related to the properties of the functional framework given by the space $E^{\mathfrak{r}}(X)$.\\

Let us remark now that when dealing with norms over functional spaces, it is quite natural to ask for the condition (\ref{Monotony}), however the property (\ref{Puissance}) is far less natural and in some situations the boundedness of the maximal function $\mathscr{M}_\mu$ given in (\ref{GeneralMaximal}) just fails (think for example in the space $L^1(\mathbb{R}^n)$ where we \emph{do not} have $\mathscr{M}_\mu: L^1\to L^1$).\\

In what follows we study some functional inequalities that can be deduced from the pointwise estimate given in the expression (\ref{PointWiseIneq_UpperGradient}).
\begin{itemize}
\item {\bf Lebesgue spaces $L^\mathfrak{r}(X)$}. Note that for, $1<\mathfrak{r}<+\infty$, the norm 
$$\|f\|_{L^\mathfrak{r}}=\left(\int_{X}|f(x)|^\mathfrak{r}d\mu(x)\right)^{\frac{1}{\mathfrak{r}}},$$
obviously satisfies the property (\ref{Monotony}) and for the property (\ref{Puissance}), as announced before, we simply have $\displaystyle{\||f|^\rho\|_{L^{\mathfrak{r}}}=\left(\int_{X}|f(x)|^{\rho\mathfrak{r}}d\mu(x)\right)^{\frac{1}{\mathfrak{r}}}=\|f\|_{L^{\rho\mathfrak{r}}}^\rho}$. Moreover, following \cite{CW}, \cite{NTV1}, the maximal function $\mathscr{M}_\mu$ associated to the measure $\mu$ and defined in the expression (\ref{Def1Maximalfunctions}) above, satisfies the control 
$$\|\mathscr{M}_\mu(f)\|_{L^{\mathfrak{r}}}\leq C\|f\|_{L^{\mathfrak{r}}},$$
as long as $1<\mathfrak{r}< +\infty$. Under the hypotheses of the Theorem \ref{Theo_Pointwise_OpMorrey} (\emph{i.e.}  $f:X\longrightarrow \mathbb{R}$ is a measurable function, $g:X\longrightarrow [0,+\infty[$ is an upper gradient of $f$ and $g$ is such that $g\in\mathcal{M}^{p,q}_\mu(X)$ with $1<p\leq q<+\infty$ and  $1<\frac{\nu}{q}$, $1<p\leq q <+\infty$) if $1<\mathfrak{r}<+\infty$ is an index such that $(1-\frac{q}{\nu})\mathfrak{r}>1$, then setting $E^\mathfrak{r}(X)=L^\mathfrak{r}(X)$ in (\ref{GeneralFuncIneq}), we obtain
\begin{equation}\label{InegaliteLebesgue}
\|T^*_K(f)\|_{L^\mathfrak{r}}\leq C\|g\|_{L^{(1-\frac{q}{\nu})\mathfrak{r}}}^{1-\frac{q}{\nu}}\|g\|_{\mathcal{M}^{p,q}_\mu}^{\frac{q}{\nu}}.
\end{equation}
To the best of our knowledge the previous estimation (\ref{InegaliteLebesgue}) seems to be new in this setting.\\

Remark in particular that if we set $\mathfrak{r}=\frac{q\nu}{\nu-q}$ and $p=q$, since we have the space identification $\mathcal{M}_\mu^{q,q}(X)=L^q(X)$, we easily derive from the previous estimate the following inequality 
$$\|T^*_K(f)\|_{L^\mathfrak{r}}\leq C\|g\|_{L^{q}}^{1-\frac{q}{\nu}}\|g\|_{\mathcal{M}^{q,q}_\mu}^{\frac{q}{\nu}}=C\|g\|_{L^{q}}^{1-\frac{q}{\nu}}\|g\|_{L^{q}}^{\frac{q}{\nu}},$$
\emph{i.e.}
$$\|T^*_K(f)\|_{L^\mathfrak{r}}\leq C\|g\|_{L^{q}},$$
and this gives a new Sobolev-like inequality. 
\item {\bf Lorentz spaces $L^{\mathfrak{r},\mathfrak{m}}(X)$}. Lorentz spaces are an useful generalization of Lebesgue spaces. For a measurable function $f:X\longrightarrow \mathbb{R}$, for $\alpha\geq 0$ and for some indexes $1\leq \mathfrak{r}<+\infty$ and $1\leq \mathfrak{m}< +\infty$, we can characterize the Lorentz spaces $L^{\mathfrak{r},\mathfrak{m}}(X)$ by the condition 
$$\|f\|_{L^{\mathfrak{r},\mathfrak{m}}}=\mathfrak{r}^{\frac{1}{\mathfrak{m}}}\left(\int_{0}^{+\infty}\left(\alpha\, \mu\left(\{x\in X: |f(x)|>\alpha\} \right)^{\frac{1}{\mathfrak{r}}}\right)^{\mathfrak{m}}\frac{d\alpha}{\alpha}\right)^{\frac {1}{\mathfrak{m}}}<+\infty,$$
and when $\mathfrak{m}=+\infty$, we simply write $\|f\|_{L^{\mathfrak{r},\infty}}=\underset{\alpha >0}{\sup} \; \left\{\alpha \mu\left(\{x\in X: |f(x)|>\alpha\} \right)^{\frac{1}{\mathfrak{r}}}\right\}$.
For these spaces it is easy to see that we have the property (\ref{Monotony}) as well as the property (\ref{Puissance}): for some index $\rho>0$ and for $1\leq \mathfrak{r}<+\infty$, $1\leq\mathfrak{m}\leq +\infty$, we have 
$$\||f|^\rho\|_{L^{\mathfrak{r},\mathfrak{m}}}=\|f\|_{L^{\rho\mathfrak{r}, \rho\mathfrak{m}}}^\rho,$$
see \cite[Proposition 1.2.11]{Chamorro} for a proof. The boundedness of the maximal function $\mathscr{M}_\mu$ in this context was studied in \cite{CW}, \cite{NTV1}, see also \cite{Kos}.\\

Now, in the framework of the Theorem \ref{Theo_Pointwise_OpMorrey}, if $f:X\longrightarrow \mathbb{R}$ is a measurable function and if $g:X\longrightarrow [0,\infty)$ is an upper gradient of $f$, then by applying the Lorentz norm $L^{\mathfrak{r}, \mathfrak{m}}$ to the pointwise inequality (\ref{PointWiseIneq_UpperGradient}) we easily derive the functional control 
\begin{eqnarray*}\label{InegaliteLorentz}
\|T^*_K(f)\|_{L^{\mathfrak{r}, \mathfrak{m}}}&\leq& C\|\mathscr{M}_\mu(g)\|_{L^{(1-\frac{q}{\nu})\mathfrak{r}, (1-\frac{q}{\nu})\mathfrak{m}}}^{1-\frac{q}{\nu}}\|g\|_{\mathcal{M}^{p,q}_\mu}^{\frac{q}{\nu}}\\
&\leq& C\|g\|_{L^{(1-\frac{q}{\nu})\mathfrak{r}, (1-\frac{q}{\nu})\mathfrak{m}}}^{1-\frac{q}{\nu}}\|g\|_{\mathcal{M}^{p,q}_\mu}^{\frac{q}{\nu}},
\end{eqnarray*}
where we have $\frac{q}{\nu}<1$, $1<(1-\frac{q}{\nu})\mathfrak{r}<+\infty$ and $1<(1-\frac{q}{\nu})\mathfrak{m}<+\infty$.  The boundedness of the maximal function $\mathscr{M}_\mu$ in the  context of measure spaces was studied in \cite{CW}, \cite{NTV1}, see also \cite{Kos}.\\

A particular case of the previous inequality is the following: if $1<\mathfrak{r}<+\infty$ is such that $(1-\frac{q}{\nu})\mathfrak{r}=1$, then we can write, by taking the $L^{\mathfrak{r}, \infty}$ norm to the both sides of the pointwise control (\ref{PointWiseIneq_UpperGradient}):
$$\|T^*_K(f)\|_{L^{\mathfrak{r}, \infty}}\leq C\|\mathscr{M}_\mu(g)\|_{L^{(1-\frac{q}{\nu})\mathfrak{r}, \infty}}^{1-\frac{q}{\nu}}\|g\|_{\mathcal{M}^{p,q}_\mu}^{\frac{q}{\nu}}=C\|\mathscr{M}_\mu(g)\|_{L^{1, \infty}}^{1-\frac{q}{\nu}}\|g\|_{\mathcal{M}^{p,q}_\mu}^{\frac{q}{\nu}},$$
and since the maximal function $\mathscr{M}_\mu$ is bounded from $L^1(X)$ to $L^{1,\infty}(X)$, we obtain the estimate
\begin{equation*}
\|T^*_K(f)\|_{L^{\mathfrak{r}, \infty}}\leq C\|\mathscr{M}_\mu(g)\|_{L^{1, \infty}}^{1-\frac{q}{\nu}}\|g\|_{\mathcal{M}^{p,q}_\mu}^{\frac{q}{\nu}}\leq C\|g\|_{L^{1}}^{1-\frac{q}{\nu}}\|g\|_{\mathcal{M}^{p,q}_\mu}^{\frac{q}{\nu}}.
\end{equation*}
Note that, in the scale of Lebesgue and Lorentz spaces, the case when $(1-\frac{q}{\nu})\mathfrak{r}=1$ seems to be a lower end point for the parameter $1<\mathfrak{r}<+\infty$.
\item {\bf Morrey spaces $\mathcal{M}^{p,q}_\mu$}. These spaces were introduced with the condition (\ref{Def_Morrey_space}) above. As before, the property (\ref{Monotony}) is straightforward and for $\rho>0$ we can write (as long as $\rho p>1$)
\begin{eqnarray*}
\||f|^\rho\|_{\mathcal{M}^{p,q}_\mu}&=&\underset{x\in X, \; r>0}{\sup}\left(\frac{1}{\mu(B(x,r))^{1-\frac{p}{q}}}\int_{B(x,r)}(|f(y)|^\rho)^{p} d\mu(y)\right)^{\frac{1}{p}}\\
&=&\underset{x\in X, \; r>0}{\sup}\left(\frac{1}{\mu(B(x,r))^{1-\frac{(\rho p)}{(\rho q)}}}\int_{B(x,r)}|f(y)|^{\rho p} d\mu(y)\right)^{\frac{1}{p}}=\|f\|_{\mathcal{M}^{\rho p, \rho q}_\mu}^\rho,
\end{eqnarray*}
which is the property (\ref{Puissance}). The boundedness of the maximal function $\mathscr{M}_\mu$ in the setting of Morrey spaces was studied in \cite{Sihwaningrum} and \cite{Sihwaningrum1} (see also the references therein).\\

In the setting of the Theorem \ref{Theo_Pointwise_OpMorrey}, if $f:X\longrightarrow \mathbb{R}$ is a measurable function and if $g:X\longrightarrow [0,+\infty]$ is an upper gradient of $f$, we can obtain the following inequality
\begin{eqnarray*}\label{InegaliteMorrey}
\|T^*_K(f)\|_{\mathcal{M}_\mu^{p_1, q_1}}&\leq & C\|\mathscr{M}_\mu(g)\|_{\mathcal{M}_\mu^{(1-\frac{q}{\nu})p_1, (1-\frac{q}{\nu})q_1}}^{1-\frac{q}{\nu}}\|g\|_{\mathcal{M}^{p,q}_\mu}^{\frac{q}{\nu}}\\
&\leq&C\|g\|_{\mathcal{M}_\mu^{(1-\frac{q}{\nu})p_1, (1-\frac{q}{\nu})q_1}}^{1-\frac{q}{\nu}}\|g\|_{\mathcal{M}^{p,q}_\mu}^{\frac{q}{\nu}},
\end{eqnarray*}
where we have $\frac{q}{\nu}<1$, $1<p\leq q<+\infty$ and $1<p_1\leq q_1<+\infty$, $1<(1-\frac{q}{\nu})p_1<+\infty$ and $1<(1-\frac{q}{\nu})q_1<+\infty$.\\

This estimate seems to be new in this context. Note that one particular case of the previous inequality is given when $(1-\frac{q}{\nu})p_1=p$ and $(1-\frac{q}{\nu})q_1=q$: we can then deduce from (\ref{InegaliteMorrey}), the control 
\begin{eqnarray*}
\|T^*_K(f)\|_{\mathcal{M}_\mu^{p_1, q_1}}&\leq &C\|g\|_{\mathcal{M}_\mu^{(1-\frac{q}{\nu})p_1, (1-\frac{q}{\nu})q_1}}^{1-\frac{q}{\nu}}\|g\|_{\mathcal{M}^{p,q}_\mu}^{\frac{q}{\nu}}=C\|g\|_{\mathcal{M}_\mu^{p, q}}^{1-\frac{q}{\nu}}\|g\|_{\mathcal{M}^{p,q}_\mu}^{\frac{q}{\nu}}\\
&\leq & C\|g\|_{\mathcal{M}_\mu^{p, q}}.
\end{eqnarray*}
\item {\bf Orlicz spaces}. 
Let us recall that if $\phi:[0,+\infty[\longrightarrow[0,+\infty]$ is a left-continuous increasing function with $\phi(0)=0$ which is neither identically zero nor identically infinite on $]0,+\infty[$, we can consider the corresponding \emph{Young function} $\Phi(t)=\displaystyle{\int_{0}^t\phi(\tau)d\tau}$ and then the Orlicz space $L^\Phi(X)$ associated to the function $\Phi$ is defined as the set of $\mu$ measurable functions $f:X\longrightarrow \mathbb{R}$ such that the Luxemburg norm
\begin{equation*}
\|f\|_{L^\Phi}=\inf\left\{\lambda > 0: \, \int_{X}\Phi(|f(x)|/\lambda)d\mu(x)\leq1\right\},
\end{equation*}
is finite. We can easily see here that if $\Phi(t)=t^p$ for $1\leq p<+\infty$, we have $L^\Phi(X)=L^p(X)$ and we recover the classical Lebesgue spaces. Note now that a Young function $\Phi$ is convex, increasing, left continuous, $\Phi(0)=0$ and $\Phi(t)\to\infty$ as $t\to +\infty$.
 Since the functional $\|\cdot\|_{L^\Phi}$ is a norm, we have $\|\lambda f\|_{L^\Phi}=|\lambda|\; \|f\|_{L^\Phi}$ and if $f,g$ are two measurable functions such that $|f|\leq |g|$ \emph{a.e.}, then we have the order-preserving property (\ref{Monotony})
$$\|f\|_{L^\Phi(X)}\leq \|g\|_{L^\Phi(X)}.$$
However, the property (\ref{Puissance}) should be handled more carefully, and for this, we will use the following rescaling property as defined in Section 3 of \cite{RaSam}: for any real $\rho>0$, we define the space $L^\Phi_\rho(X)$ by the condition
$$L^\Phi_\rho(X)=\{f:X\longrightarrow \mathbb{R}: \|f\|_{L^\Phi_\rho}<+\infty\},$$
where 
$$\|f\|_{L^\Phi_\rho}=\inf\left\{\lambda > 0: \, \int_{X}\Phi_\rho(|f(x)|/\lambda)d\mu(x)\leq1\right\},$$
with $\Phi_\rho(t)=\Phi(t^\rho)$. With this definition of the functional $\|\cdot\|_{L^\Phi_\rho}$ we have:
$$\||f|^\rho\|_{L^\Phi}=\|f\|_{L^\Phi_\rho}^\rho,$$
see Lemma 3.2 of \cite{RaSam} for a proof of this fact.\\

The last ingredient -the boundedness of the maximal operator- also requires a different treatment, and it is classical to impose some restrictions on the Young function $\Phi$. A function $\Phi$ satisfies the $\Delta_2$-condition if there exists a constant $C_\Phi>0$ such that 
$$\Phi(2\tau)\le C_\Phi \Phi(\tau),$$
for every $\tau\ge 0$. The $\Delta_2$-condition implies that $\Phi$ is strictly increasing and continuous. Now, a Young function $\Phi$ is said to satisfy the $\nabla_2$-condition, denoted also by $\Phi\in \nabla_2$, if
$$\Phi(\tau)\leq\frac{1}{2C} \Phi(C\tau),\qquad \tau\geq 0,$$
for some $C > 1$. With these two restrictions, if $\Phi$  satisfies the $\Delta_2$-condition and $\Phi\in  \nabla_2$ then we have the following boundedness property 
\begin{equation*}
\|\mathscr{M}_\mu(f)\|_{L^{\Phi}}\leq C\|f\|_{L^{\Phi}}.
\end{equation*}
See \cite{Cianchi0} or \cite{Heikkinen} for a proof of this fact. See also \cite{Cianchi1} or \cite{Derigoz} for more properties of this type of operators in Orlicz spaces on $\mathbb{R}^n$.\\

With all these ingredients at hand, we obtain from the pointwise inequality (\ref{PointWiseIneq_UpperGradient}):
$$\|T^*_K(f)\|_{L^\Phi}\leq C\left\|\mathscr{M}_\mu(g)^{1-\frac{q}{\nu}}\right\|_{L^\Phi}\|g\|_{\mathcal{M}^{p,q}_\mu}^{\frac{q}{\nu}}.$$
Using the space $L^\Phi_\rho$ introduced above with $\rho=1-\frac{q}{\nu}$ we can write
$$\|T^*_K(f)\|_{L^\Phi}\leq C\|\mathscr{M}_\mu(g)\|_{L^\Phi_{(1-\frac{q}{\nu})}}^{1-\frac{q}{\nu}}\|g\|_{\mathcal{M}^{p,q}_\mu}^{\frac{q}{\nu}}.$$
Now, if the function $\Phi_{(1-\frac{q}{\nu})}$ satisfies the $\nabla_2$-condition and if we have $\Phi_{(1-\frac{q}{\nu})}\in\Delta_2$, then we obtain 
$$\|T^*_K(f)\|_{L^\Phi}\leq C\|g\|_{L^\Phi_{(1-\frac{q}{\nu})}}^{1-\frac{q}{\nu}}\|g\|_{\mathcal{M}^{p,q}_\mu}^{\frac{q}{\nu}}.$$

\item {\bf Lebesgue spaces of variable exponent}. To introduce the Lebesgue spaces of variable exponent we first need to consider a measurable function $p:X\longrightarrow [1,+\infty]$, we then define $p^-=\underset{x\in X}{\mbox{ess inf}} \; \{p(x)\}$ and $p^+=\underset{x\in X}{\mbox{ess sup}} \; \{p(x)\}$ and, for the sake of simplicity and to avoid technicalities, we will always assume here that we have $1<p^-\leq p^+<+\infty$. Next, for $f:X\longrightarrow \mathbb{R}$ a $\mu$-measurable function we define the modular function $\varrho_{p(\cdot)}$ associated with the variable exponent $p(\cdot)$ by the expression
\begin{equation*}
\varrho_{p(\cdot)}(f)=\int_{X}|f(x)|^{p(x)}d\mu(x),
\end{equation*}
and we will consider the following Luxemburg norm
$$\|f\|_{L^{p(\cdot)}}=\inf\{\lambda > 0: \, \varrho_{p(\cdot)}(f/\lambda)\leq1\}.$$
We will thus define the spaces $L^{p(\cdot)}(X)$ as the set of $\mu$-measurable functions $f$ such that the quantity $\|f\|_{L^{p(\cdot)}}$ is finite. The spaces $L^{p(\cdot)}(X)$ are normed spaces and they have some nice structural properties (see the books \cite{Cruz1}, \cite{Diening} for more details). In particular the Luxemburg norm is order preserving: if $f,g\in L^{p(\cdot)}(X)$ are such that $|f|\leq |g|$ \emph{a.e.}, then we have 
$$\|f\|_{L^{p(\cdot)}}\leq \|g\|_{L^{p(\cdot)}},$$
see \cite[Proposition 2.7]{Cruz1}.
Another particular feature of the Luxemburg norm for Lebesgue spaces of variable exponent is the following: for all real parameter $\rho>0$ such that $\frac{1}{p^-}\leq \rho<+\infty$, we have the identity
$$\||f|^\rho\|_{L^{p(\cdot)}}=\|f\|^\rho_{L^{\rho p(\cdot)}}.$$
See \cite[Proposition 2.18]{Cruz1} for a proof of this fact. For a more detailed study of these spaces, see the books \cite{Cruz1} and \cite{Diening}.\\

We thus have in this setting the properties (\ref{Monotony}) and (\ref{Puissance}), however, some extra assumptions on the function $p(\cdot)$ are needed to obtain the boundedness of the maximal operator $\mathscr{M}_\mu$ and for this we need to recall some definitions in order to write the condition which gives the boundedness of the maximal operator $\mathscr{M}_\mu$. Indeed, let $r(\cdot):X\longrightarrow [0,+\infty[$ be a measurable function.
\begin{itemize}
\item We say that $r(\cdot)$ is \emph{locally log-H\"older continuous} and write $r(\cdot)\in LH_0$ if there exists a constant $C_0$ such that 
$$|r(x)-r(y)|\leq\frac{C_0}{-\log(d(x, y))},$$
for all $x,y\in X$ with $d(x, y)< \frac{1}{2}$.\\
\item We say that $r(\cdot)$ is \emph{locally log-H\"older continuous at infinity with respect to a base point $x_0\in X$} and  we write $r(\cdot)\in LH_\infty$ if there exist two constants $C_\infty,r_\infty$ such that 
$$|r(x)-r_\infty|\leq\frac{C_\infty}{\log(e+d(x, x_0))},$$
for all $x \in X$.\\
\item For $r(\cdot)\in LH_0\cap LH_\infty$ we say that $r(\cdot)$ is \emph{globally log-H\"older continuous} and we define $LH=LH_0\cap LH_\infty$.
\end{itemize}  
With these notions, we obtain the following condition: if $p(\cdot):X\longrightarrow [1,+\infty]$ is such that $1/p(\cdot)\in LH$ then the maximal function $\mathscr{M}_\mu$ is bounded in the Lebesgue spaces of variable exponents:
$$\|\mathscr{M}_\mu(f)\|_{L^{p(\cdot)}}\leq C\|f\|_{L^{p(\cdot)}}.$$
See Corollary 1.8 from \cite{Adamowicz} and \cite[Theorem 1.1]{Cruz-Shukla} in the setting of spaces of homogeneous type for a proof of this estimate.\\
	
Now, with all these results at our disposal, under the framework of the Theorem \ref{Theo_Pointwise_OpMorrey}, if we apply the $L^{r(\cdot)}$ norm to the inequality (\ref{PointWiseIneq_UpperGradient}), we obtain
$$\|T^*_K(f)\|_{L^{r(\cdot)}}\leq C\left\|\mathscr{M}_\mu(g)^{1-\frac{q}{\nu}}\right\|_{L^{r(\cdot)}}\|g\|_{\mathcal{M}^{p,q}_\mu}^{\frac{q}{\nu}}=C\left\|\mathscr{M}_\mu(g)\right\|_{L^{(1-\frac{q}{\nu})r(\cdot)}}^{1-\frac{q}{\nu}}\|g\|_{\mathcal{M}^{p,q}_\mu}^{\frac{q}{\nu}},$$
thus if $1<(1-\frac{q}{\nu})r^-\leq (1-\frac{q}{\nu})r(\cdot)\leq (1-\frac{q}{\nu})r^+<+\infty$ and if $\frac{1}{(1-\frac{q}{\nu})r(\cdot)}\in LH$, then, by the boundedness of the maximal function in the Lebesgue spaces of variable exponent, we can write
\begin{eqnarray*}
\|T^*_K(f)\|_{L^{r(\cdot)}}\leq C\left\|g\right\|_{L^{(1-\frac{q}{\nu})r(\cdot)}}^{1-\frac{q}{\nu}}\|g\|_{\mathcal{M}^{p,q}_\mu}^{\frac{q}{\nu}}.
\end{eqnarray*}
\begin{Remarque}
Note that since in the statement of the Theorem 1 we need the condition \eqref{ConditionConstantes}, it is clear that $\nu>1$, therefore we can apply a result from \cite{Cruz-Shukla} namely Corollary 1.5: If $(X,d,\mu)$ is an Ahlfors regular space with dimension $1<\nu$, let $p(\cdot):X\longrightarrow [1,+\infty]$ be such that $p(\cdot)\in LH$ and $1<p_-\le p_+<\nu$. Define $q(\cdot)$ by $\frac{1}{p(\cdot)}-\frac{1}{q(\cdot)}=\frac{1}{\nu}$. Then there exists $C=C(p(\cdot),\nu, X)$ such that 
$$\|\mathcal{R}_{1,\mu}\left(g\right)\|_{L^{q(\cdot)}}\le C\|g\|_{L^{p(\cdot)}}.$$
Using this estimate and our pointwise estimate \eqref{EstimationOperateur1}, we obtain that
$$\|T^*_K(f)\|_{L^{q(\cdot)}}\leq \|\mathcal{R}_{1,\mu}\left(g\right)\|_{L^{q(\cdot)}}\le C\|g\|_{L^{p(\cdot)}},$$
which is a useful functional inequality that only involves Lebesgue spaces of variable exponent. 
\end{Remarque}
\item {\bf Orlicz-Musielak spaces}

Let us first recall the definition of these spaces.  Let $\varphi(x,t):X\times [0,+\infty[\longrightarrow [0,+\infty[$ be a function satisfying the folllowing conditions:
\begin{itemize}
\item[$(\varphi 1)$] The function $\varphi(\cdot,t)$ is measurable on $X$ for every $t\ge 0$ and $\phi(x,\cdot)$ is continuous on $[0,+\infty[$ for every $x\in X$;\\
\item[$(\varphi 2)$] there exists a constant $C_1\ge 1$ such that $\frac{1}{C_1}\le\varphi(x,1)\le C_1$ for every $x\in X$;\\
\item[$(\varphi 3)$] there exists a constant $C_2\ge 1$ such that $\frac{\varphi(x,t_1)}{t_1}\le C_2\frac{\varphi(x,t_2)}{t_2}$ for all $x\in X$ and $0<t_1<t_2$.\\ 
\end{itemize}
We define now $\bar{\varphi}(x,t)=\underset{0<s\le t}{\sup} \varphi(x,s)$ and we write $\Phi(x,t)=\displaystyle{\int_0^t\frac{\bar\varphi(x,s)}{s}dr}$ for all $x\in X$ and $t\ge 0$. For a given function $\varphi(x,t)$ satisfying the conditions $(\varphi 1), (\varphi 2)$ and $(\varphi 3)$, the Musielak-Orlicz space $L^\varphi(X)$ is a Banach space with the following norm (see \cite{RaSam} and the references therein):
$$\|f\|_{L^\varphi}=\inf\left\{\lambda>0:\int_X \Phi\left(y,\frac{|f(y)|}{\lambda}\right)d\mu(y)\le 1\right\}.$$
Since the functional $\|\cdot\|_{L^\varphi}$ is a norm, as in the case of Orlicz spaces, we have that $\|\lambda f\|_{L^\varphi}=|\lambda|\; \|f\|_{L^\varphi}$ for any $\lambda \in \mathbb{R}$ and if $f,g$ are two measurable functions such that $|f|\leq |g|$ \emph{a.e.}, then we have the order-preserving property (\ref{Monotony})
$$\|f\|_{L^\varphi}\leq \|g\|_{L^\varphi}.$$
For the property (\ref{Puissance})  we  use, as in the case of Orlicz spaces, the following rescaling property as defined in Section 3 of \cite{RaSam}: for any real $\rho>0$, we define the space $L^\varphi_\rho(X)$ by the condition
$$L^\varphi_\rho(X)=\{f:X\longrightarrow \mathbb{R}: \|f\|_{L^\varphi_\rho}<+\infty\},$$
where 
$$\|f\|_{L^\varphi_\rho}=\inf\left\{\lambda > 0: \, \int_{X}\Phi_\rho(y,|f(y)|/\lambda)d\mu(y)\leq1\right\},$$
with $\Phi_\rho(x,t)=\Phi(x,t^\rho)$. With this definition of the functional $\|\cdot\|_{L^\varphi_\rho}$ we have:
$$\||f|^\rho\|_{L^\varphi}=\|f\|_{L^\varphi_\rho}^\rho,$$
see Lemma 3.2 of \cite{RaSam}.\\

Now, for the boundedness of the maximal operator, we need to consider some conditions for the function $\varphi(x,t)$. Let us consider $p,q\ge 1$ and $\eta>0$ be given.\\
\begin{itemize}
\item[$(\varphi 3;0;p)$] there exists a constant $A_{2,0,p}\ge 1$ such that
$$\frac{\varphi(x,t_1)}{t_1^{p}}\le A_{2,0,p}\frac{\varphi(x,t_2)}{t_2^{p}},$$
for all $x\in X$ whenever $0<t_1<t_2\le 1;$\\
\item[$(\varphi 3;\infty;q)$]  there exists a constant $A_{2,\infty,q}\ge 1$ such that
$$\frac{\varphi(x,t_1)}{t_1^{q}}\le A_{2,\infty,q}\frac{\varphi(x,t_2)}{t_2^{q}},$$
for all $x\in X$ whenever $1\le t_1<t_2;$\\
\item[$(\varphi 5;\eta)$] for every $\gamma>0$ there exists a constant $B_{\gamma,\eta},$ $0<B_{\gamma,\eta}\le 1$ such that 
$$\varphi(x, B_{\gamma,\eta}t)\le \varphi(y,t),$$
whenever $x,y\in X$, $d(x,y)\le \frac{\gamma}{t^\eta}$ and $t\ge 1$;\\
\item[$(\varphi 6)$] there exist two functions on $X$, $g$ and $h$ and there is a constant $B_\infty$, $0<B_\infty\le 1$ such that $0\le g(x)\le 1$, $0\le h(x)\le 1$ for every $x\in X$ and $\varphi(\cdot,g(\cdot))\in L^1(X)$, $h\in L^1(X)$ and 
$$\varphi(x,B_\infty t)\le \varphi(y,t)+h(x),$$
for $x,y\in X$ and $g(y)\le t\le 1$.\\
\end{itemize}
The previous conditions are enough to obtain the boundedness of the maximal functions in this framework. Indeed, since in our setting the measure $\mu$ is Ahlfors $\nu$-regular implies that $\mu$ satisfies the doubling condition. It is known that if $\mu$ satisfies the doubling condition then the space $X$ satisfies the $(M\lambda)$ condition for any $\lambda>0$ \emph{i.e.} there exists a constant $C>0$ such that 
$$\mu(\{x\in X:M_\lambda f(x)>k\})\le \frac{C}{k}\int_X|f(y)|d\mu(y),$$
for all measurable functions $f\in L^1(X)$ and $k>0$, where $M_\lambda f$ is the dilated Hardy-Littlewood maximal function defined by
$$M_\lambda f(x)=\sup_{r>0}\frac{1}{\mu(B(x,\lambda r))}\int_{B(x,r)}|f(y)d\mu(y)|,$$
see \cite{{GCRdF}} for more details. In particular for $\lambda=1$, $M_1=\mathscr{M}_\mu$ the Hardy-Littlewood maximal operator satisfies $(M1)$.\\

Next suppose that the function $\varphi(x,t)$ satisfies the conditions $(\varphi3;0,p)$, $(\varphi3;\infty,q)$, $(\varphi 5;\eta)$ and $(\varphi 6)$ stated above for $p_1,q_1>1$ and $\eta>0$ satisfying $\eta\le\frac{q}{\nu}$. Then there is a constant $C>0$ such that 
$$\|\mathscr{M}_\mu(f)\|_{L^\varphi}\le C \|f\|_{L^\varphi},$$
for all $f\in L^\varphi(X)$, see \cite[Theorem 3.7 ]{OS25} for a proof of this fact. See also \cite{Ohno}.\\

Now from the pointwise inequality (\ref{PointWiseIneq_UpperGradient}), we obtain that:
$$\|T^*_K(f)\|_{L^\varphi}\leq C\left\|\mathscr{M}_\mu(g)^{1-\frac{q}{\nu}}\right\|_{L^\varphi} \|g\|_{\mathcal{M}^{p,q}_\mu}^{\frac{q}{\nu}}.$$
Using the space $L^\varphi_\rho(X)$ introduced above with $\rho=1-\frac{q}{\nu}$ we can write
$$\|T^*_K(f)\|_{L^\varphi}\leq C\|\mathscr{M}_\mu(g)\|_{L^\varphi_{(1-\frac{q}{\nu})}}^{1-\frac{q}{\nu}}\|g\|_{\mathcal{M}^{p,q}_\mu}^{\frac{q}{\nu}}.$$
Now, if the function $\varphi_{(1-\frac{q}{\nu})}$ satisfies the conditions $(\varphi3;0,p)$, $(\varphi3;\infty,q)$, $(\varphi 5;\eta)$ and $(\varphi 6)$ for $p,q>1$ and $\eta>0$ satisfying $\eta\le\frac{q}{\nu}$, then we obtain the following functional inequality
$$\|T^*_K(f)\|_{L^\varphi}\leq C\|g\|_{L^\varphi_{(1-\frac{q}{\nu})}}^{1-\frac{q}{\nu}}\|g\|_{\mathcal{M}^{p,q}_\mu}^{\frac{q}{\nu}}.$$
\end{itemize}
We studied here Lebesgue spaces, Lorentz, Morrey, Orlicz, Lebesgue spaces of variable exponent and Orlicz-Musielak spaces. This list is of course non exhaustive and any functional space $E^\mathfrak{r}(X)$ endowed with a norm $\|\cdot\|_{E^\mathfrak{r}}$ that satisfies the properties (\ref{Monotony}), (\ref{Puissance}) and (\ref{GeneralMaximal}) we will lead to a generic estimate of the form (\ref{GeneralFuncIneq}).\\

\noindent {\bf Conflict of interest.} We declare that we do not have any commercial or associative interest that represents a conflict of interest in connection with the work submitted.\\

\noindent {\bf Acknowledgment.} This work was supported by the GDRI ECO-Math.

\end{document}